\newcommand{\rt}{\rightarrow}
\newcommand{\lra}{{\xymatrix@C=3em{\ar @{>} [r] & }}}
\newcommand{\lrt}{\longrightarrow}
\newcommand{\llf}{\longleftarrow}
\def\db{\operatorname{\mathsf{D^b}}}
\newcommand{\pd}{{\rm{pd}}}
\def\ds{\operatorname{\mathsf{D_{sg}}}}
\newcommand{\si}{\mathsf{\Sigma}}
\newcommand{\syz}{\mathsf{\Omega}}
\newcommand{\st}{\stackrel}
\newcommand{\HT}{\mathsf{H}}
\newcommand{\Pair} {\mathsf{Pair}}
\newcommand{\DB} {\mathsf{DB}}
\newcommand{\Z}{\mathbb{Z} }
\newcommand{\G}{\mathcal{G} }
\newcommand{\umon}{\underline{\mathsf{Mon}}}
\newcommand{\n}{{\mathfrak{n}}}
\newcommand{\con}{\mathsf{cone}}
\newcommand{\cok}{{\rm{Coker}}}
\newcommand{\Ker}{{\rm{Ker}}}
\newcommand{\Tor}{\mathsf{Tor}}
\newcommand{\imm}{\rm{Im}}
\newcommand{\Hom}{{\mathsf{Hom}}}
\newcommand{\id}{{\mathsf{id}}}
\newcommand{\mon}{{\mathsf{Mon}}}
\newcommand{\gpd}{{\mathsf{Gpd}}}
\def\si{\operatorname{\mathsf{\Sigma}}}
\newcommand{\Ext}{\mathsf{{Ext}}}
\newcommand{\A}{\mathcal{{A}}}
\newcommand{\C}{\mathcal{C}}
\newcommand{\cp} {\mathcal{P}}
\newcommand{\E}{\mathcal{E}}
\newtheorem{theorem}{Theorem}[section]
\newtheorem{cor}[theorem]{Corollary}
\newtheorem{lemma}[theorem]{Lemma}
\newtheorem{prop}[theorem]{Proposition}
\theoremstyle{definition}
\newtheorem{example}[theorem]{Example}
\newtheorem{remark}[theorem]{Remark}
\newtheorem{s}[theorem]{}
\theoremstyle{plain}
\theoremstyle{definition}
\newtheorem{dfn}[theorem]{Definition}
\numberwithin{equation}{section}
\begin{document}

\title[stable category of monomorphisms between (Gorenstein) projective modules]
{The stable category of monomorphisms between (Gorenstein) projective modules with applications}

\author[Bahlekeh, Fotouhi, Hamlehdari and Salarian]{Abdolnaser Bahlekeh, Fahimeh Sadat Fotouhi, Mohammad Amin Hamlehdari and Shokrollah Salarian}



\address{Department of Mathematics, Gonbad-Kavous University, Postal Code:4971799151, Gonbad-e-Kavous, Iran}
\email{bahlekeh@gonbad.ac.ir}
\address{School of Mathematics, Institute for Research in Fundamental Science (IPM), P.O.Box: 19395-5746, Tehran, Iran}
\email{ffotouhi@ipm.ir}

\address{Department of Pure Mathematics, Faculty of Mathematics and Statistics, University of Isfahan, P.O.Box: 81746-73441, Isfahan, Iran }\email{amin.hamlehdari@sci.ui.ac.ir}

\address{Department of Pure Mathematics, Faculty of Mathematics and Statistics, University of Isfahan, P.O.Box: 81746-73441, Isfahan,
Iran and \\ School of Mathematics, Institute for Research in Fundamental Science (IPM), P.O.Box: 19395-5746, Tehran, Iran}
\email{Salarian@ipm.ir}


\subjclass[2020]{13D09, 18G80, 13H10, 13C60}

\keywords{monomorphism category, stable category, triangulated category, singularity category, the category of D-branes of type B, matrix factorization}

\thanks{The research of the second author was in part supported by a grant from IPM}

\begin{abstract}
Let $(S, \n)$ be a commutative noetherian local ring and let $\omega\in\n$ be non-zero divisor. This paper is concerned with the two categories of monomorphisms between finitely generated (Gorenstein) projective $S$-modules, such that their cokernels are annihilated by $\omega$. It is shown that these categories, which will be denoted by $\mon(\omega, \cp)$ and $\mon(\omega, \G)$, are both Frobenius categories with the same projective objects.
It is also proved that the stable category $\umon(\omega, \cp)$ is triangle equivalent to the category of D-branes of type B, $\DB(\omega)$, which has been introduced by Kontsevich and studied by Orlov. Moreover, it will be observed that the stable categories $\umon(\omega, \cp)$ and $\umon(\omega, \G)$ are closely related to the singularity category of the factor ring $R=S/({\omega)}$. Precisely, there is a fully faithful triangle functor from the stable category $\umon(\omega, \G)$ to $\ds(R)$, which is dense if and only if $R$ (and so $S$) are Gorenstein rings. Particularly, it is proved that the density of the restriction of this functor to $\umon(\omega, \cp)$, guarantees the regularity of the ring $S$.
\end{abstract}

\maketitle

\tableofcontents
\section{Introduction}
Assume that $(S, \n)$ is a commutative noetherian local ring, $\omega\in\n$ a non-zerodivisor and $R$ is the factor ring $S/{(\omega)}$. This paper deals with the following two subcategories of the monomorphism category of finitely generated $S$-modules:\\ (1) The category $\mon(\omega, \cp)$ consisting of all monomomorphisms $(P\st{f}\rt Q)$ with $P, Q$ projective $S$-modules and $\cok f$ is annihilated by $\omega$.\\ (2) The category $\mon(\omega, \G)$ consisting of all monomomorphisms $(G\st{f}\rt P)$ where $G$ is Gorenstien projective and $P$ is projective $S$-modules such that $\cok f$ is annihilated by $\omega$.

Our motivation to investigate these categories is their ability to describe some nice properties of the rings $R$ and $S$.
It is shown that these subcategories are both Frobenius with the same projective objects. {Namely, we show that these are exact categories in the sense of Quillen which have enough projectives and injectives, and whose projectives and injectives coincide, see Theorem \ref{frob} and Corollary \ref{s1}. So the stable categories $\umon(\omega, \cp)$ and $\umon(\omega, \G)$ will be triangulated. Our first main result reveals that the stable category $\umon(\omega, \cp)$ is triangle equivalent to the category D-branes of type B, $\DB(\omega)$. Indeed, we have the following theorem.
\begin{theorem}(1) The inclusion functor $i:\mon(\omega, \cp)\rt\mon(\omega, \G)$ induces a fully faithful triangle functor $\underline{i}:\umon(\omega, \cp)\rt\umon(\omega, \G)$.\\ (2) There is a triangle equivalence functor $F:\umon(\omega, \cp)\rt\DB(\omega)$.
\end{theorem}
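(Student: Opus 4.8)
The plan is to build the functor $F$ explicitly on objects and morphisms, then verify it is exact (sends admissible conflations to triangles) and that it is both fully faithful and dense. Recall that an object of $\mon(\omega,\cp)$ is a monomorphism $f:P\to Q$ between finitely generated projective $S$-modules with $\omega\cdot\cok f=0$; since $\omega$ kills $\cok f$, there is a unique $S$-linear map $g:Q\to P$ with $gf=\omega\cdot\id_P$, and then automatically $fg=\omega\cdot\id_Q$ (because $f$ is monic and $f(gf)=f(\omega\id_P)=(\omega\id_Q)f$). Thus to each object of $\mon(\omega,\cp)$ we associate the pair $(P\xrightarrow{f}Q\xrightarrow{g}P)$ with $gf=\omega\id$, $fg=\omega\id$ — that is, precisely a matrix factorization of $\omega$, equivalently an object of $\DB(\omega)$. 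I would first check that this assignment is well-defined and functorial: a morphism $(a,b):(f)\to(f')$ in $\mon(\omega,\cp)$ satisfies $bf=f'a$, and one shows $a g = g' b$ by composing with the monic $f'$. This gives a functor $\mathsf{mon}(\omega,\cp)\to\DB(\omega)$ at the level of the underlying categories.

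Next I would identify the projective-injective objects of $\mon(\omega,\cp)$ (already described in Theorem \ref{frob}) with the objects of $\DB(\omega)$ that become zero in $\DB(\omega)$, i.e. the contractible/trivial matrix factorizations such as $(P\xrightarrow{\id}P\xrightarrow{\omega}P)$ and $(P\xrightarrow{\omega}P\xrightarrow{\id}P)$. Showing $F$ sends projectives to zero-objects is what allows it to descend to a functor $F:\umon(\omega,\cp)\to\DB(\omega)$ on stable categories. Then I must check $F$ is a triangle functor: the suspension in $\umon(\omega,\cp)$ is computed via the Frobenius structure (cokernel of an injective envelope), and I would compare it with the shift on $\DB(\omega)$, which swaps $f$ and $g$ up to sign; an admissible short exact sequence of monomorphisms should be sent, via the standard mapping-cone construction for matrix factorizations, to a distinguished triangle in $\DB(\omega)$. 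This is bookkeeping but needs care with signs and with the identification of the connecting morphism.

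For full faithfulness, I would show that $\Hom$ in each category, after killing maps that factor through projectives, agrees. A morphism $(f)\to(f')$ that factors through a projective object of $\mon(\omega,\cp)$ should correspond exactly to a nullhomotopic morphism of matrix factorizations (one of the form $f' t + s f$ for suitable $s,t$), and conversely; this is the core computation. Density is easier once the objects are matched: every object of $\DB(\omega)$ is (stably) a reduced matrix factorization $(F_0\xrightarrow{\varphi_1}F_1\xrightarrow{\varphi_0}F_0)$ with $F_i$ free, and since $\omega$ is a non-zerodivisor $\varphi_1$ is injective with cokernel annihilated by $\omega$, so $(\varphi_1)$ lies in $\mon(\omega,\cp)$ and maps to it; one checks the trivial summands are exactly the projectives, so nothing is lost.

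The main obstacle I anticipate is the triangle-functor verification — matching the two suspension functors and checking compatibility of distinguished triangles — because the triangulated structure on $\umon(\omega,\cp)$ comes abstractly from the Frobenius structure (Happel's construction) while on $\DB(\omega)$ it is the explicit matrix-factorization shift/cone; reconciling these requires carefully choosing functorial injective envelopes in $\mon(\omega,\cp)$ and tracking the induced isomorphisms. Part (1) — that $\underline{i}:\umon(\omega,\cp)\to\umon(\omega,\G)$ is a fully faithful triangle functor — I would handle by first noting $i$ is exact and sends projective-injectives to projective-injectives (by Corollary \ref{s1} the projectives coincide), hence descends and is triangulated; full faithfulness then reduces to showing that a morphism between objects of $\mon(\omega,\cp)$ which factors through a projective object of $\mon(\omega,\G)$ already factors through a projective object of $\mon(\omega,\cp)$, which follows from the explicit description of the projectives.
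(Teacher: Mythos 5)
Your proposal follows essentially the same route as the paper: part (1) is deduced from the coincidence of the projective objects (Corollary \ref{s1}) together with Happel's lemma, and part (2) by sending $(P\st{f}\rt Q)$ to the matrix factorization $(f,f_{\si})$ furnished by Lemma \ref{si}, checking that this assignment is an exact equivalence preserving projectives, and then matching the two suspension functors on the stable categories. The one caveat is your in-line derivation of $fg=\omega\,\id_Q$ from $gf=\omega\,\id_P$ (composing with the monic $f$ only gives agreement on $\mathrm{im}\,f$; one also needs $\omega Q\subseteq \mathrm{im}\,f$ and that $\omega$ is a non-zerodivisor on $Q$), but this is precisely the content of the paper's Lemma \ref{si}, so nothing essential is missing.
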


}
A mathematical definition of $B$-branes ($D$-branes of type $B$) in affine Landau-Ginzburg models is proposed by M. Kontsevich \cite{kontsevich1994homological}. A Landau-Ginzburg model is a regular scheme $X$ and a regular function $W$ on $X$ such that the morphism $W: X\lra \mathbb{A}^1$ is flat. {A result due to Orlov \cite{orlov2003triangulated} indicates that the category $\DB(\omega)$ is nothing more than the homotopy category of matrix factorizations. We should remember that the notion of matrix factorizations appeared many years ago in the paper of Eisenbud \cite{eisenbud1980homological}, where he has used to study maximal Cohen-Macaulay modules over local rings, for more details see \ref{matrix}. It is shown by Buchweitz \cite{buchweitz1987maximal} and Orlov \cite[Theorem 3.9]{orlov2003triangulated} that the category $\DB(\omega)$ is triangle equivalent to the singularity category $\ds(R)$ of $R$, provided that $S$ is a regular ring. Furthermore, an analog of this result for more general complete intersections has been proved by Berg and Jorgensen in \cite{bergh2015complete}.

Our second main observation indicates that there is a tie connection between the aforementioned stable categories and the singularity category $\ds(R)$. Precisely, we prove the following theorem.
\begin{theorem}(1) The functor $T:\umon(\omega, \G)\rt\ds(R)$ sending each object $(G\st{f}\rt P)$ to $\cok f$, viewed as a stalk complex, is a fully faithful triangle functor. Moreover, $T$ is dense if and only if $R$ (and so, $S$) is Gorenstein. \\ (2) The fully faithful triangle functor $T\circ\underline{i}:\umon(\omega, \cp)\rt\ds(R)$ is dense if and only if $S$ is a regular ring.
\end{theorem}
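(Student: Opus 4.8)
The plan is to describe $T$ via Gorenstein projective $R$-modules. First, $T$ is well defined, since for $(G\st{f}\rt P)\in\mon(\omega,\G)$ the module $\cok f$ is annihilated by $\omega$, hence a finitely generated $R$-module, which we view as a stalk complex. The structural point is that $\cok f$ is \emph{Gorenstein projective over $R$}: as $\omega\cdot\cok f=0$ there is $g\colon P\to G$ with $fg=\omega\cdot 1_P$, and then $gf=\omega\cdot 1_G$ because $f$ is monic; since $\Ass G\subseteq\Ass S$, the element $\omega$ is a non-zerodivisor on $G$, so reducing $(f,g)$ modulo $\omega$ produces a $2$-periodic acyclic complex $\cdots\to P/\omega P\xrightarrow{\bar g}G/\omega G\xrightarrow{\bar f}P/\omega P\xrightarrow{\bar g}\cdots$ of $R$-modules with $\cok\bar f=\cok f$; total reflexivity descends modulo a regular element, so $G/\omega G$ is Gorenstein projective over $R$, and splicing with a complete resolution of $G/\omega G$ exhibits $\cok f$ as Gorenstein projective over $R$. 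By Theorem \ref{frob} the projective objects of $\mon(\omega,\G)$ lie in $\add\{(S\st{1}\rt S),(S\st{\omega}\rt S)\}$, with cokernels $0$ and $R$, both perfect, so $T$ descends to $\umon(\omega,\G)$; it is a triangle functor because conflations in $\mon(\omega,\G)$ are componentwise short exact and the snake lemma sends them to short exact sequences of Gorenstein projective $R$-modules, while the suspension (realized by $0\to X\to I\to\Sigma X\to 0$ with $\cok I$ free) goes to the cosyzygy, i.e.\ the suspension in $\ds(R)$.

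Next, $T$ is fully faithful. For $X=(G\st{f}\rt P)$ and $Y=(H\st{h}\rt Q)$ set $M=\cok f$, $N=\cok h$, with structure epimorphisms $\pi\colon P\twoheadrightarrow M$ and $\rho\colon Q\twoheadrightarrow N$. Because $M$ and $N$ are Gorenstein projective over $R$, one has $\Hom_{\ds(R)}(M,N)\cong\underline{\Hom}_R(M,N)$ (the canonical functor $\underline{\Gproj}(R)\to\ds(R)$ is fully faithful, syzygies being invertible in $\ds(R)$ and $\Omega$ an autoequivalence of $\underline{\Gproj}(R)$). So it suffices to see that $(\alpha,\beta)\mapsto(\text{induced map on cokernels})$ is an isomorphism $\underline{\Hom}_{\mon(\omega,\G)}(X,Y)\xrightarrow{\ \sim\ }\underline{\Hom}_R(M,N)$. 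Surjectivity is clear: a map $\gamma\colon M\to N$ lifts, by projectivity of $P$, to $\beta\colon P\to Q$ with $\rho\beta=\gamma\pi$, whence $\rho\beta f=0$, so $\beta f=h\alpha$ for a unique $\alpha$, and $(\alpha,\beta)$ induces $\gamma$. For injectivity, if $(\alpha,\beta)$ induces a map vanishing in $\underline{\Hom}_R(M,N)$, factor it through a free $R$-module $F=\cok(S^m\st{\omega}\rt S^m)$; lifting the two resulting maps exactly as above yields a factorization $X\to(S^m\st{\omega}\rt S^m)\to Y$ whose difference with $(\alpha,\beta)$ induces the zero map on cokernels, and any such morphism has its $P$-component factoring through $h$, hence factors through $(P\st{1}\rt P)$. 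Thus $(\alpha,\beta)$ is a sum of two morphisms through projective objects, so it is $0$ in $\umon(\omega,\G)$.

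It remains to identify when $T$ is dense. If $T$ is dense, then every object of $\ds(R)$ — in particular the residue field $k$ when $R$ is not regular — is isomorphic in $\ds(R)$ to some $\cok f$, hence to a Gorenstein projective $R$-module; so a high syzygy $\Omega_R^t k$ is Gorenstein projective, i.e.\ $k$ has finite Gorenstein dimension over $R$, forcing $R$ to be Gorenstein. Conversely, if $R$ is Gorenstein then so is $S$ (because $\omega\in\n$ is a non-zerodivisor) and $\ds(R)\simeq\underline{\MCM}(R)$; given a maximal Cohen--Macaulay $R$-module $M$, we have $\depth_S M=\depth_R M=\dim S-1$, so by the depth lemma $\Omega_S^1 M$ is maximal Cohen--Macaulay, hence Gorenstein projective, over $S$, and $(\Omega_S^1 M\hookrightarrow S^n)\in\mon(\omega,\G)$ has cokernel $M$; since every object of $\ds(R)$ is stably an MCM $R$-module, $T$ is dense. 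This proves (1) (and, en route, that $\umon(\omega,\G)\simeq\underline{\MCM}(R)\simeq\ds(R)$ whenever $R$ is Gorenstein).

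For (2), $T\circ\underline{i}$ is fully faithful, being a composite of fully faithful triangle functors (Theorem \ref{frob} giving that of $\underline{i}$). If $S$ is regular, then $\Gproj(S)=\proj(S)$, so $\mon(\omega,\G)=\mon(\omega,\cp)$ and $\underline{i}$ is an equivalence, while $R$ is a hypersurface, hence Gorenstein, so $T\circ\underline{i}\cong T$ is dense by (1). Conversely, if $T\circ\underline{i}$ is dense, then so is $T$, so $S$ is Gorenstein and $\ds(R)\simeq\underline{\MCM}(R)$; the image of $T\circ\underline{i}$ consists of cokernels $\cok(P\st{f}\rt Q)$ with $P,Q$ projective over $S$, each of projective dimension at most $1$ over $S$, so density forces every MCM $R$-module — in particular $\Omega_R^{\dim R}(k)$ — to be, up to free summands, of finite projective dimension over $S$; slicing the syzygy sequence of $k$ over $R$, whose terms $R^{b_i}$ satisfy $\pd_S R^{b_i}=1$, then gives $\pd_S k<\infty$, so $S$ is regular by Serre's criterion. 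The main difficulty is the $\underline{\Hom}$-identification of the second paragraph — especially its faithfulness, where a stable morphism must be split along the two kinds of projective object $(P\st{1}\rt P)$ and $(S^m\st{\omega}\rt S^m)$ — together with recognizing cokernels as Gorenstein projective $R$-modules; everything else is formal once the Frobenius and triangulated structures of Theorem \ref{frob} are available.
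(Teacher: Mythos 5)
Your overall architecture is the same as the paper's: factor $T$ as $\umon(\omega, \G)\rt\underline{\G}(R)\rt\ds(R)$, where the first functor sends $(G\st{f}\rt P)$ to $\cok f$ and the second is the stalk-complex embedding of Bergh--Oppermann--Jorgensen; obtain density from Buchweitz's theorem when $R$ is Gorenstein; and test against the residue field for the converses. Within that framework you make some genuinely different choices, mostly to good effect. You prove full faithfulness of $\umon(\omega,\G)\rt\underline{\G}(R)$ directly, and your two-step factorization of a cokernel-killing morphism through $(S^m\st{\omega}\rt S^m)$ and then $(P\st{\id}\rt P)$ is correct and makes the argument self-contained, whereas the paper outsources this to Proposition 3.6 of its companion paper. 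Your ``density $\Rightarrow$ Gorenstein'' step replaces the paper's mapping-cone/roof computation (via finiteness of Gorenstein projective dimension over $S$) with the observation that $k\cong\cok f$ in $\ds(R)$ forces a high $R$-syzygy of $k$ to be Gorenstein projective; this works, but it silently uses $\Hom_{\ds(R)}(M,N)\cong\varinjlim_n\underline{\Hom}_R(\syz^nM,\syz^nN)$ to pass from an isomorphism in $\ds(R)$ to a stable isomorphism of syzygies, and that should be cited. The converse of (2), slicing the $R$-free syzygy sequence of $k$ to get $\pd_Sk<\infty$, is fine and is essentially what the paper's terse proof of Theorem \ref{reg1} intends.

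The one step I would not accept as written is the claim that $\cok f$ is Gorenstein projective over $R$. Your $2$-periodic acyclic complex $\cdots\rt P/\omega P\st{\bar g}\rt G/\omega G\st{\bar f}\rt P/\omega P\rt\cdots$ is correctly constructed, but its terms alternate between a projective module and the module $G/\omega G$, which is only Gorenstein projective; so this is not a totally acyclic complex of projectives, and ``splicing with a complete resolution of $G/\omega G$'' is not a well-defined one-line move, since the maps into and out of $G/\omega G$ in your complex are not the augmentations of any complete resolution. Dimension shifting along the two short exact sequences $0\rt\cok\bar f\rt G/\omega G\rt\cok\bar g\rt 0$ and $0\rt\cok\bar g\rt P/\omega P\rt\cok\bar f\rt 0$ yields only $2$-periodicity of $\Ext^{*}_R(\cok f,R)$, not its vanishing. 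The fact itself is true, and the paper proves it carefully in Proposition \ref{dd}: Rees's change-of-rings lemma converts $\Ext^i_S(\cok f,S)=0$ for $i\ge 2$ into $\Ext^i_R(\cok f,R)=0$ for $i\ge 1$, and reflexivity is extracted from the four-term exact sequence $0\rt M\rt\bar G\rt\bar P\rt M\rt 0$. Alternatively, the change-of-rings formula $\mathrm{G}\text{-}\mathrm{dim}_SM=\mathrm{G}\text{-}\mathrm{dim}_RM+1$ for nonzero $R$-modules $M$ gives it in one line. Either repair closes the gap; as written, this step is an assertion rather than a proof.
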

The first statement is proved in Theorem \ref{gor} and the second one appears in Theorem \ref{reg1}.

We should remark that the singularity category $\ds(R)$ is by definition the Verdier quotient of the bounded derived category $\db(R)$ of $R$ by the perfect complexes. This notion has been introduced by Buchweitz \cite{buchweitz1987maximal}, and studied actively ever since the relation with mirror symmetry was found by Orlov \cite{orlov2003triangulated}.
}

{The contributions of the paper can be summarized as follows. We consider the monomorphism categories $\mon(\omega, \cp)$ and $\mon(\omega, \G)$ with the functors $\Pair(\omega)\st{F}\llf\mon(\omega, \cp)\st{i}\lrt\mon(\omega, \G)$, where $\Pair(\omega)$ is the category of pairs which is known to be a Frobenius category, see \cite{orlov2003triangulated}. It will be shown that $\mon(\omega, \cp)$ and $\mon(\omega, \G)$ are Frobenius categories, and so, their stable categories are triangulated. Moreover, the relationships of these triangulated categories with $\ds(R)$ and $\DB(\omega)$ are discussed. Indeed, we have the following diagram of triangle functors and equivalences:

{\footnotesize\[\xymatrix{\umon(\omega, \cp)\ar[rr]^{\underline{i}}\ar[dr]_{\underline{T}_1}^{\simeq}\ar[dd]_{F}^{\simeq} && \umon(\omega, \G)\ar[dd]^{T} & \\ & \underline{\Pair}(\omega)\ar[dl]_{\simeq}& \\ \DB(\omega)\ar[rr]~& & \ds(R),}\]}where $\underline{i}$ and $T$ are fully faithful triangle functors. Moreover, the density of $T$ (resp. $T\circ\underline{i}$) describes the Gorensteiness of $R$ (resp. regularity of $S$).

The study of monomorphism categories, known also as submodule categories, goes back to Birkhoff \cite{bir} in 1934, in which he initiated to classify the indecomposable objects of the submodule category of $\Z/{(p^n)}$, with $n\geq 2$ and $p$ a prime number. These categories provide a framework for addressing open problems in linear algebra using tools and results from homological algebra, combinatorics, and geometry. From this fact, the study of monomorphism categories has been the subject of many papers during the last decades, see for example \cite{ringel2008invariant, stable,ringel2014submodule,kussin2013nilpotent,xiong2014auslander,
asadollahi2022monomorphism,hafezi2022stable,hafezi2021subcategories,kosakowska2023abelian}
and references therein.}

As a convention throughout the paper, $(S, \n)$ is a commutative noetherian local ring, $\omega\in\n$ a non-zerodivisor, and $R$ is the factor ring $S/{(\omega)}$. Also, by a module, we mean a finitely generated module.

\section{The categories of monomorphisms between (Gorenstein) projective modules}
This section is devoted to investigating two kinds of subcategories of the monomorphism category of finitely generated $S$-modules. Precisely, we focus on monomorphisms between (Gorenstein) projective $S$-modules in which their cokernels are annihilated by $\omega$. It will be shown that these categories are Frobenius with the same projective objects. Let us begin this section with the following definition.

\begin{dfn}
By the category $\mon(\omega, \G)$, we mean a category that whose objects are those $S$-monomorphisms $(G\st{f}\rt P)$, where $G\in\G(S),$ $P\in\cp(S)$ and $\cok f$ is an $R$-module, {that is, $\cok f$ is annihilated by $\omega$}. Here $\G(S)$ (resp. $\cp(S)$) is the category of all finitely generated Gorenstein projective (resp. projective) $S$-modules. Moreover, a morphism $\psi=(\psi_1, \psi_0):(G\st{f}\rt P)\lrt (G'\st{f'}\rt P')$ between two objects is a pair of $S$-homomorphisms $\psi_1:G\rt G'$ and $\psi_0:P\rt P'$ such that $\psi_0f=f'\psi_1$. It is clear that $\mon(\omega, \G)$ is a full additive subcategory of the monomorphism category of finitely generated $S$-modules. The full subcategory of $\mon(\omega, \G)$ consisting of those objects $(P\st{f}\rt Q)$ with $P,Q\in\cp(S)$, will be denoted by $\mon(\omega, \cp)$.
\end{dfn}

Recall that an exact category in the sense of Quillen is an additive category endowed with a class of kernel-cokernel pairs, called conflations, subject to certain axioms, see \cite[Definition 2.1]{buhler2010exact} and also \cite[Appendix A]{keller1990chain}. Let us give the precise definition.
{
\begin{dfn}(\cite[Definition 2.1]{buhler2010exact}) Let $\C$ be an additive category and let $\E$ be a class of kernel-cokernel pairs in $\C$. A kernel-cokernel pair $(i, p)$ in $\E$, which is also called an admissible pair, is a pair of composable morphisms $X'\st{i}\rt X\st{p}\rt X''$, where $i$ is a kernel of $p$ and $p$ is a cokernel of $i$. In this case, $i$ (resp. $p$) is called an admissible monic (resp. admissible epic).\\ The pair $(\C, \E)$ is called an exact category, if the following axioms are satisfied:
\begin{itemize} \item[$(E0)$] For each object $C\in\C$, the identity morphism $\id_C$ is an admissible monic. \item [$(E0^{op})$] For each object $C\in\C$, the identity morphism $\id_C$ is an admissible epic.\item[$(E1)$] The class of admissible moics is closed under composition.\item[$(E1^{op})$] The class of admissible epics is closed under composition.\item[$(E2)$] The push-out of an admissible monic along an arbitrary morphism exists and yields an admissible monic.\item[$(E2^{op})$] The pull-back of an admissible epic along an arbitrary morphism exists and yields an admissible epic.
\end{itemize}
Admissible pairs, admissible monics, and admissible epics are also called conflations, inflations and deflations, respectively, see \cite{keller1990chain} and \cite{quillen2006higher}.

For example, an abelian category is naturally an exact category whose conflations are the class of all short exact sequences. More generally, an extension-closed subcategory of an abelian category is an exact category in the same manner. This is a basic recognition principle of exact categories, for many examples arise in this way, see \cite[Lemma 10.20]{buhler2010exact}.
\end{dfn}

It is known that the morphism category of $S$-modules, $\mathsf{Mor}(S),$ is an abelian category, and so, it will be an exact category. As $\mon(\omega, \G)$ is a full subcategory of $\mathsf{Mor}(S)$,
we will observe that this exact structure is inherited by $\mon(\omega, \G)$. {Namely, we show that, considering  those short exact sequences in $\mathsf{Mor}(S)$ with terms in $\mon(\omega, \G)$ as conflations, $\mon(\omega, \G)$ is an exact category.}  In this direction, a natural way is to show that $\mon(\omega, \G)$ is an extension-closed subcategory of $\mathsf{Mor}(S)$. But, as Example \ref{non} reveals, this is not the case. So, we need to check all the axioms of exact categories directly. This will be done through a series of results.}

\begin{example}\label{non}
Assume that $(S, \n)$ is a one-dimensional commutative Gorenstein local ring and  $\omega\in\n$ is non-zero divisor. Assume that $M$ is an $S$-module that is not annihilated by $\omega$ and $\omega^2M=0$, e.g. $M=S/{(\omega^2)}$. Set $N:=0:_{M}\omega$, the submodule consisting of those objects in $M$ which are annihilated by $\omega$, and $K:=M/N$. Evidently, $K$ is also annihilated by $\omega$. Since $S$ is a one-dimensional Gorenstein ring, we may take short exact sequences of $S$-modules $0\rt G\st{f}\rt P\st{\pi}\rt N\rt 0$ and $0\rt G'\st{f'}\rt P'\st{\pi'}\rt K\rt 0$, where $G, G'\in\G(S)$ and $P, P'\in\cp(S)$, see for example \cite[Theorem 10.2.14]{enochs2011relative}. Thus $(G\st{f}\rt P)$ and $(G'\st{f'}\rt P')$ are objects of $\mon(\omega, \G)$. Applying the horseshoe lemma, gives us the short exact sequnce $0\lrt (G\st{f}\rt P)\lrt (G''\st{g}\rt P\oplus P')\lrt (G'\st{f'}\rt P')\lrt 0$ in $\mathsf{Mor}(S)$. Since $\cok g=M$ is not annihilated by $\omega$, the middle term does not belong to $\mon(\omega, \G)$. This indeed means that $\mon(\omega, \G)$ is not an extension-closed subcategory of $\mathsf{Mor}(S)$.
\end{example}

\begin{lemma}\label{admono}Let $(G\st{g}\rt P)\st{\varphi=(\varphi_1,\varphi_0)}\lrt(G'\st{g'}\rt P')$ and $(G'\st{g'}\rt P')\st{\theta=(\theta_1,\theta_0)}\lrt(G''\st{g''}\rt P'')$ be two morphisms in $\mon(\omega, \G)$ such that $\varphi$ and $\theta$ are injective and their cokernels are in $\mon(\omega, \G)$. Then $\cok\theta\varphi\in\mon(\omega, \G)$.
\end{lemma}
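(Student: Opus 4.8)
The plan is to carry out everything inside the ambient abelian category $\Mor(S)$, using the standard short exact sequence attached to a composable pair of monomorphisms. Since $\varphi$ and $\theta$ are injective in $\mon(\omega,\G)$, all four component maps $\varphi_1,\varphi_0,\theta_1,\theta_0$ are injective $S$-homomorphisms, so $\theta_1\varphi_1$ and $\theta_0\varphi_0$ are injective; thus $\theta\varphi$ is a monomorphism in $\Mor(S)$, and $\cok(\theta\varphi)$ is the object $\big(\cok(\theta_1\varphi_1)\st{h}\rt\cok(\theta_0\varphi_0)\big)$ with $h$ induced by $g''$. First I would write down, in $\Mor(S)$, the short exact sequence
\[0\lrt \cok\varphi\lrt\cok(\theta\varphi)\lrt\cok\theta\lrt 0\]
arising from the inclusions $(G\st{g}\rt P)\hookrightarrow(G'\st{g'}\rt P')\hookrightarrow(G''\st{g''}\rt P'')$ (the usual ``third isomorphism'' sequence, valid in any abelian category); its source and target components are compatible short exact sequences of $S$-modules $0\to\cok\varphi_1\to\cok(\theta_1\varphi_1)\to\cok\theta_1\to 0$ and $0\to\cok\varphi_0\to\cok(\theta_0\varphi_0)\to\cok\theta_0\to 0$.

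Next I would check the four defining conditions for $\cok(\theta\varphi)\in\mon(\omega,\G)$ one at a time. As $\cok\varphi,\cok\theta\in\mon(\omega,\G)$, their target components $\cok\varphi_0,\cok\theta_0$ are projective, so the second sequence above splits and $\cok(\theta_0\varphi_0)\in\cp(S)$; their source components $\cok\varphi_1,\cok\theta_1$ lie in $\G(S)$, which is extension-closed, so $\cok(\theta_1\varphi_1)\in\G(S)$. For the injectivity of $h$, one uses that the full subcategory $\mon(S)\subseteq\Mor(S)$ of objects with injective structure map is closed under extensions --- immediate from the snake lemma, since $\ker h$ is squeezed between the (vanishing) kernels of the structure maps of $\cok\varphi$ and $\cok\theta$ --- together with the fact that $\cok\varphi,\cok\theta$ lie in $\mon(S)$.

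The only point requiring a little care is that $\cok h$ is annihilated by $\omega$. It would be tempting to read off from the snake lemma the sequence $0\to\cok\bar{g}'\to\cok h\to\cok\bar{g}''\to 0$, where $\bar{g}',\bar{g}''$ denote the structure maps of $\cok\varphi,\cok\theta$; but an extension of an $\omega$-torsion module by an $\omega$-torsion module is in general only $\omega^2$-torsion, which is not good enough. Instead one argues directly: unwinding definitions, $\cok h=P''/\big(\theta_0\varphi_0(P)+g''(G'')\big)$ is a quotient of $\cok g''=P''/g''(G'')$, and $\cok g''$ is annihilated by $\omega$ precisely because $(G''\st{g''}\rt P'')\in\mon(\omega,\G)$; hence $\omega\cdot\cok h=0$. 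Assembling the four points gives $\cok\theta\varphi\in\mon(\omega,\G)$. I expect the $\omega$-torsion step to be the main (and essentially only) obstacle; the rest is routine diagram-chasing in $\Mor(S)$. One may also note that, together with the trivial observation that $\theta\varphi$ is monic, this lemma is the heart of the closure of admissible monics under composition, i.e.\ axiom $(E1)$ for the exact structure being built on $\mon(\omega,\G)$.
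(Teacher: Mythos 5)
Your proof is correct and follows essentially the same route as the paper: both decompose $\cok(\theta\varphi)$ as an extension of $\cok\theta$ by $\cok\varphi$, use extension-closure of $\G(S)$ and $\cp(S)$ for the two components, apply the snake lemma to get injectivity of the induced structure map $h$, and obtain the $\omega$-annihilation of $\cok h$ from the fact that it is a quotient of $\cok g''$. The only cosmetic differences are that you package the relevant commutative diagram as the third-isomorphism sequence $0\to\cok\varphi\to\cok(\theta\varphi)\to\cok\theta\to 0$ in the abelian category $\Mor(S)$ (the paper assembles the same diagram by hand and checks its commutativity explicitly), and that you identify $\cok h=P''/\bigl(\theta_0\varphi_0(P)+g''(G'')\bigr)$ directly rather than extracting the exact sequence $0\to\cok g\to\cok g''\to\cok h\to 0$ from a second application of the snake lemma, as the paper does.
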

\begin{proof}Assume that $\cok\varphi=(L_1\st{l}\rt L_0)$, $\cok\theta=(E_1\st{e}\rt E_0)$ and $\cok\theta\varphi=(Z_1\st{z}\rt Z_0)$. By our assumption, $(L_1\st{l}\rt L_0)$ and $(E_1\st{e}\rt E_0)$ lie in $\mon(\omega, \G)$. We must show that the same is true for $(Z_1\st{z}\rt Z_0)$. {By our hypothesis, there are commutative diagrams of $S$-modules with exact rows \[\xymatrix{0\ar[r]& G \ar[r]^{\varphi_1} \ar[d]_{g} & G' \ar[r]^{\alpha_1} \ar[d]_{g'} & L_1 \ar[d]_{l}
\ar[r]&0\\0\ar[r]& P \ar[r]^{\varphi_0} & P' \ar[r]^{\alpha_0} & L_0\ar[r]&0, }\]
\[\xymatrix{0\ar[r]& G \ar[r]^{\theta_1\varphi_1} \ar[d]_{g} & G'' \ar[r]^{\alpha'_1} \ar[d]_{g''} & Z_1 \ar[d]_{z}
\ar[r]&0\\0\ar[r]& P \ar[r]^{\theta_0\varphi_0} & P'' \ar[r]^{\alpha'_0} & Z_0\ar[r]&0}\] and
\[\xymatrix{0\ar[r]& G' \ar[r]^{\theta_1} \ar[d]_{g'} & G'' \ar[r]^{\gamma_1} \ar[d]_{g''} & E_1 \ar[d]_{e}
\ar[r]&0\\0\ar[r]& P' \ar[r]^{\theta_0} & P'' \ar[r]^{\gamma_0} & E_0\ar[r]&0.}\]Consider the following commutative diagram with exact rows and columns:\[\xymatrix{G~\ar[r]^{\varphi_1}\ar@{=}[d]& G'\ar[r]^{\alpha_1}\ar[d]_{\theta_1}& L_1\ar[d]_{h_1}\\ G~\ar[r]^{\theta_1\varphi_1} & G''\ar[r]^{\alpha'_1}\ar[d]_{\gamma_1}& Z_1\ar[d]_{h_1'}\\ & E_{1}~\ar@{=}[r] & E_{1}.}\]Since $\theta_1$ (resp. $\gamma_1$) is a monomorphism (resp. an epimorphism), the same is true for $h_1$ (resp. $h'_1$).  In particular, we get the short exact sequence of $S$-modules, $0\rt L_1\st{h_1}\rt Z_1\st{h'_1}\rt E_1\rt 0.$
}
 {Also, replacing the composition morphism $G\st{\varphi_1}\rt G'\st{\theta_1}\rt G''$ with $P\st{\varphi_0}\rt P'\st{\theta_0}\rt P''$ in the latter diagram, gives us a short exact sequence of $S$-modules, $0\rt L_0\st{h_0}\rt Z_0\st{h_0'}\rt E_0\rt 0$. {Moreover, one obtains the equalities $h_0\alpha_0=\alpha'_0\theta_0$ and $h'_0\alpha'_0=\gamma_0$}. Now, by making use of the fact that $\alpha_1$ and $\alpha'_1$ are epimorphisms, one may get} the following commutative diagram with exact rows: \[\xymatrix{0\ar[r]& L_1 \ar[r]^{h_1} \ar[d]_{l} & Z_1 \ar[r]^{h_1'} \ar[d]_{z} & E_1 \ar[d]_{e}
\ar[r]&0\\0\ar[r]& L_0 \ar[r]^{h_0} & Z_0 \ar[r]^{h_0'} & E_0\ar[r]&0. }\]{To see this, consider the equalities: $zh_1\alpha_1=z\alpha'_1\theta_1=\alpha'_0g''\theta_1=\alpha'_0\theta_0g'=h_0\alpha_0g'=h_0l\alpha_1$. Now since $\alpha_1$ is an epimorphism, one infers that $zh_1=h_0l$. Similarly, one may obtain the equality $eh'_1=h'_0z$. As $(L_1\st{l}\rt L_0)$ and $(E_1\st{e}\rt E_0)$ are objects of $\mon(\omega, \G)$, we have that $L_1, E_1\in\G(S)$ and $L_0, E_0\in\cp(S)$. Now since} $\cp(S)$ and $\G(S)$ are closed under extensions, $Z_1\in\G(S)$ and $Z_0\in\cp(S)$. Moreover, as $l$ and $e$ are monomorphisms, the snake lemma yields that $z$ is also a monomorphism. So, it remains to show that $\cok z$ is annihilated by $\omega$. To this end, take the following commutative diagram: \[\xymatrix{0\ar[r]& G \ar[r]^{\theta_1\varphi_1} \ar[d]_{g} & G'' \ar[r]^{\alpha'_1} \ar[d]_{g''} & Z_1 \ar[d]_{z}\ar[r]&0
\\ 0\ar[r]&P \ar[r]^{\theta_0\varphi_0} & P'' \ar[r]^{\alpha'_0} & Z_0\ar[r]&0.}\] As $z$ is a monomorphism, another use of the snake lemma, gives us the short exact sequence of $S$-modules
$0\rt\cok g\rt\cok g''\rt\cok z\rt 0$. Now by our hypothesis, $\omega\cok g''=0$, we infer that $\cok z$ is also annihilated by $\omega$. So the proof is finished.
\end{proof}

\begin{remark}\label{epi} Suppose that $(G\st{g}\rt P)\st{\varphi=(\varphi_1,\varphi_0)}\lrt(G'\st{g'}\rt P')$ is an epimorphism in $\mon(\omega, \G)$. So, assuming $\Ker\varphi=(L_1\st{l}\rt L_0)$, we get the following commutative diagram with exact rows: \[\xymatrix{0\ar[r]& L_1 \ar[r] \ar[d]_{l} & G \ar[r]^{\varphi_1} \ar[d]_{g} & G' \ar[d]_{g'}\ar[r]&0
\\ 0\ar[r]&L_0 \ar[r] & P \ar[r]^{\varphi_0} & P'\ar[r]&0.}\] Since the class of (Gorenstein) projective modules is closed under kernels of epimorphisms, $L_1$ (resp. $ L_0$) will be Gorenstein projective (resp. projective). Furthermore, applying the snake lemma yields that $\cok l$ is annihilated by $\omega$, and so, $(L_1\st{l}\rt L_0)\in\mon(\omega, \G)$. Thus, the category $\mon(\omega, \G)$ is closed under kernels of epimorphisms.
\end{remark}

\begin{prop}\label{pushout}The push-out of any diagram {$$\begin{CD}(G\st{f}\rt P) @>{\varphi=(\varphi_1, \varphi_0)}>>(G'\st{g}\rt P')\\@V\theta=(\theta_1, \theta_0) VV \\
(G''\st{h}\rt P'')\end{CD}$$} in $\mon(\omega, \G)$, where $\varphi$ is an injective and $\cok\varphi\in\mon(\omega, \G)$, exists.
\end{prop}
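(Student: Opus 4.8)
The plan is to build the push-out inside the ambient abelian category $\mathsf{Mor}(S)$, where it automatically exists, and then to check that all three objects of the resulting push-out cocone already lie in $\mon(\omega,\G)$; since $\mon(\omega,\G)$ is a \emph{full} subcategory of $\mathsf{Mor}(S)$, such a cocone is then a push-out in $\mon(\omega,\G)$ as well, because any test object of $\mon(\omega,\G)$ is in particular a test object of $\mathsf{Mor}(S)$. Concretely, the push-out in $\mathsf{Mor}(S)$ is formed componentwise: set $D_1=G'\sqcup_G G''$ and $D_0=P'\sqcup_P P''$, with the canonically induced morphism $d\colon D_1\rt D_0$, so that, using that $\varphi_1$ and $\varphi_0$ are injective, the defining push-out sequences $0\rt G\rt G'\oplus G''\rt D_1\rt 0$ and $0\rt P\rt P'\oplus P''\rt D_0\rt 0$ are exact, with leftmost maps $(\varphi_1,-\theta_1)$ and $(\varphi_0,-\theta_0)$; the relation $\varphi_0 f=g\varphi_1$, $\theta_0 f=h\theta_1$ makes $f\colon G\rt P$ the left-hand vertical of a morphism between these two sequences, the middle vertical being $g\oplus h$ and the right-hand one $d$. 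Moreover, since the push-out of a monomorphism along an arbitrary morphism is again a monomorphism with the same cokernel, writing $\cok\varphi=(L_1\st{l}\rt L_0)$ we also obtain exact sequences $0\rt G''\rt D_1\rt L_1\rt 0$ and $0\rt P''\rt D_0\rt L_0\rt 0$.

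With these sequences in hand, I would verify the four conditions for $(D_1\st{d}\rt D_0)$ to belong to $\mon(\omega,\G)$. The sequence $0\rt G''\rt D_1\rt L_1\rt 0$ has both outer terms in $\G(S)$ — for $L_1$ this is exactly the hypothesis $\cok\varphi\in\mon(\omega,\G)$ — and $\G(S)$ is closed under extensions, so $D_1\in\G(S)$. The sequence $0\rt P''\rt D_0\rt L_0\rt 0$ splits because $L_0\in\cp(S)$ is projective, whence $D_0\cong P''\oplus L_0\in\cp(S)$. That $d$ is a monomorphism follows from the snake lemma applied to the morphism of short exact sequences $0\rt G''\rt D_1\rt L_1\rt 0$ and $0\rt P''\rt D_0\rt L_0\rt 0$ with vertical maps $h$, $d$, $l$: since $h$ and $l$ are monic, so is $d$.

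The genuinely delicate point, and the step I expect to be the real obstacle, is showing that $\cok d$ is annihilated by $\omega$. The naive attempt uses the snake-lemma sequence $0\rt\cok h\rt\cok d\rt\cok l\rt 0$ coming from the two sequences just used; but this only exhibits $\cok d$ as an extension of two modules of $\omega$-exponent $1$, hence yields merely $\omega^2\cok d=0$, which is not enough. The fix is to use instead the \emph{defining} push-out sequences: applying the snake lemma to the morphism of short exact sequences $0\rt G\rt G'\oplus G''\rt D_1\rt 0$ and $0\rt P\rt P'\oplus P''\rt D_0\rt 0$, whose vertical maps $f$, $g\oplus h$, $d$ are all monic, produces the exact sequence $0\rt\cok f\rt\cok g\oplus\cok h\rt\cok d\rt 0$. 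Since $(G'\st{g}\rt P')$ and $(G''\st{h}\rt P'')$ are objects of $\mon(\omega,\G)$, the module $\cok g\oplus\cok h$ is killed by $\omega$, hence so is its quotient $\cok d$. Therefore $(D_1\st{d}\rt D_0)\in\mon(\omega,\G)$, and by the first paragraph the cocone constructed in $\mathsf{Mor}(S)$ is the desired push-out in $\mon(\omega,\G)$. One moreover reads off from $0\rt G''\rt D_1\rt L_1\rt 0$ and $0\rt P''\rt D_0\rt L_0\rt 0$ that the canonical morphism $(G''\st{h}\rt P'')\rt(D_1\st{d}\rt D_0)$ is a monomorphism whose cokernel is $\cok\varphi\in\mon(\omega,\G)$, which is exactly what is needed to complete the verification of axiom $(E2)$.
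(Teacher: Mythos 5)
Your proof is correct and follows essentially the same route as the paper's: form the push-out componentwise, use extension-closure of $\G(S)$ and $\cp(S)$ (plus splitting, for the projective level) to place $D_1$ and $D_0$ in the right classes, and get injectivity of $d$ from the snake lemma applied to $h$, $d$, $l$. The one place you diverge is the verification that $\omega\cok d=0$: the paper argues by an explicit element chase in the push-out $E_0$ using $\omega\cok g=\omega\cok h=0$, whereas you read off the surjection $\cok g\oplus\cok h\twoheadrightarrow\cok d$ from the snake lemma applied to the defining presentations $0\rt G\rt G'\oplus G''\rt D_1\rt 0$ and $0\rt P\rt P'\oplus P''\rt D_0\rt 0$ --- a cleaner packaging of the same fact, and your remark that the naive extension $0\rt\cok h\rt\cok d\rt\cok l\rt 0$ only yields $\omega^2\cok d=0$ correctly identifies why this step needs care.
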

\begin{proof}Assume that $\cok\varphi=(L_1\st{l}\rt L_0)$. Take the following push-out diagrams of $S$-modules:

$$\xymatrix{0\ar[r]&G\ar[r]^{\varphi_1} \ar[d]_{\theta_1} &\ G' \ar[d]^{\theta'_1}\ar[r]^{\beta_1}& L_1 \ar@{=}[d]\ar[r]& 0\\ 0\ar[r]& G'' \ar[r]^{\varphi'_1} & E_1 \ar[r]^{\beta'_1} & L_1\ar[r]& 0}$$and
$$\xymatrix{0\ar[r]&P\ar[r]^{\varphi_0} \ar[d]_{\theta_0} &\ P' \ar[d]^{\theta'_0}\ar[r]^{\beta_0}& L_0 \ar@{=}[d]\ar[r]&0\\ 0\ar[r]&P'' \ar[r]^{\varphi'_0} & E_0 \ar[r]^{\beta'_0} & L_0\ar[r] &0.}$$ By our assumption, $(L_1\st{l}\rt L_0)\in\mon(\omega, \G)$. This, in conjunction with the fact that $\G(S)$ and $\cp(S)$ are closed under extensions, would imply that $E_1$ is Gorenstein projective and $E_0$ is projective.
By the universal property of push-out diagrams, we may find a morphism $E_1\st{e}\rt E_0$, which makes the following diagram commutative.

{\footnotesize{\[\xymatrix{&G\ar[rr]^{\varphi_1}\ar[dd]_{\theta_1}\ar[ld]_{f}&&G'\ar[rr]\ar[dd]_{\theta'_1}\ar[dl]_{g}&&L_{1}\ar[dl]\ar@{=}[dd]\\ P\ar[rr]^{\varphi_0}\ar[dd]_{\theta_0}&& P'\ar[rr]\ar[dd]_{\theta'_0}&&L_{0}\ar@{=}[dd]\\&G''\ar[rr]^{\varphi'_1}\ar[dl]_{h}&&E_{1}\ar[rr]\ar@{-->}[dl]_{e}&&L_{1}\ar[dl]_{l}\\ P''\ar[rr]^{\varphi'_0}&&E_{0}\ar[rr]&&L_{0}}\]}}We claim that $(E_1\st{e}\rt E_0)\in\mon(\omega, \G)$. Since $h$ and $l$ are monomorphisms, the same is true for $e$. Now we show that $\cok e$ is annihilated by $\omega$. To do this, it suffices to show that, for a given object $x\in E_0$, {$\omega x\in{\imm}e$.
As $\cok\theta_0=\cok\theta'_0$, one may find objects $y\in P''$ and $z\in P'$ such that $x=\varphi'_0(y)+\theta'_0(z)$. So, $\omega x=\varphi'_0(\omega y)+\theta'_0(\omega z)$. Since $\omega\cok h=0$, we have that $\omega y\in {\imm}h$, and so, there is an object $t\in G''$ such that $h(t)=\omega y$. Thus one has the equalities
$\varphi'_0(\omega y)=\varphi'_0h(t)=e\varphi'_1(t)$. Similarly, as $\omega\cok g=0$, we infer that $\omega z\in{\imm}g$. Take an object $t'\in G'$ such that $g(t')=\omega z$. Hence we get the equalities $\theta'_0(\omega z)=\theta'_0g(t')=e\theta'_1(t')$. Consequently, $\omega x\in{\imm}e$, as claimed. In particular, we obtain the following commutative diagram with exact rows:$$\xymatrix{0\ar[r]&(G\st{f}\rt P)\ar[r]^{\varphi=(\varphi_1,\varphi_0)} \ar[d]_{\theta=(\theta_1,\theta_0)} &(G'\st{g}\rt P') \ar[d]^{\theta'=(\theta'_1,\theta'_0)}\ar[r]&( L_1\st{l}\rt L_0) \ar@{=}[d]\ar[r]& 0\\ 0\ar[r]& (G''\st{h}\rt P'') \ar[r]^{\varphi'=(\varphi'_1,\varphi'_0)} & (E_1\st{e}\rt E_0) \ar[r] & (L_1\st{l}\rt L_0)\ar[r]& 0,}$$ in $\mon(\omega, \G)$, which is indeed a push-out diagram. So the proof is finished.}
\end{proof}

The proof of the next result is dual to the proof of Proposition \ref{pushout} and so, we skip it.
\begin{prop}\label{pullback}The pull-back of any diagram {$$\begin{CD}&& (G''\st{h}\rt P'')\\&& @V\theta VV\\ (G\st{f}\rt P) @>{\varphi}>>(G'\st{g}\rt P'),\end{CD}$$} in $\mon(\omega, \G)$ with $\varphi$ epimorphism, exists.
\end{prop}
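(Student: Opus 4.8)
The plan is to carry out the construction of Proposition~\ref{pushout} dually: form the two relevant pull-backs of $S$-modules componentwise, assemble them into an object of $\Mor(S)$, and then check that this object lies in $\mon(\omega, \G)$. Write the given data as $\varphi = (\varphi_1, \varphi_0)\colon (G\st{f}\rt P)\rt(G'\st{g}\rt P')$ and $\theta = (\theta_1, \theta_0)\colon (G''\st{h}\rt P'')\rt(G'\st{g}\rt P')$, with $\varphi$ an epimorphism. Then $\varphi_1$ and $\varphi_0$ are both surjective, and by Remark~\ref{epi} the kernel $\Ker\varphi = (L_1\st{l}\rt L_0)$ already belongs to $\mon(\omega, \G)$; in particular $L_1\in\G(S)$, $L_0\in\cp(S)$, and $l$ is the restriction of $f$.

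First I would form the pull-backs $D_1 = G\times_{G'}G''$ and $D_0 = P\times_{P'}P''$ in the category of $S$-modules, with structure maps $\pi_1\colon D_1\rt G$, $\rho_1\colon D_1\rt G''$, $\pi_0\colon D_0\rt P$, $\rho_0\colon D_0\rt P''$. Since $\varphi_1$ and $\varphi_0$ are surjective, so are $\rho_1$ and $\rho_0$, with kernels canonically identified with $L_1$ and $L_0$; hence there are short exact sequences $0\rt L_1\rt D_1\rt G''\rt 0$ and $0\rt L_0\rt D_0\rt P''\rt 0$, and, as $\G(S)$ and $\cp(S)$ are closed under extensions, this gives $D_1\in\G(S)$ and $D_0\in\cp(S)$. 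Next, the universal property of the pull-back $D_0$ applied to the maps $f\pi_1\colon D_1\rt P$ and $h\rho_1\colon D_1\rt P''$ (which satisfy $\varphi_0\circ f\pi_1 = g\varphi_1\pi_1 = g\theta_1\rho_1 = \theta_0\circ h\rho_1$, since $\varphi,\theta$ are morphisms of pairs and $\varphi_1\pi_1 = \theta_1\rho_1$) yields a unique $d\colon D_1\rt D_0$ with $\pi_0 d = f\pi_1$ and $\rho_0 d = h\rho_1$, and a short computation shows that $d$ restricts to $l$ on $L_1$. Thus $\pi = (\pi_1,\pi_0)$ and $\rho = (\rho_1,\rho_0)$ are morphisms of pairs with $\varphi\pi = \theta\rho$.

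To see $(D_1\st{d}\rt D_0)\in\mon(\omega, \G)$, I would apply the snake lemma to the exact rows $0\rt D_1\rt G\oplus G''\rt G'\rt 0$ and $0\rt D_0\rt P\oplus P''\rt P'\rt 0$ (the right-hand maps being $(a,c)\mapsto\varphi_1(a)-\theta_1(c)$ and $(b,c')\mapsto\varphi_0(b)-\theta_0(c')$) with vertical maps $d$, $f\oplus h$, $g$. Since $f$, $h$, $g$ are all monic, the connecting sequence collapses to $0\rt\cok d\rt\cok f\oplus\cok h\rt\cok g\rt 0$; in particular $d$ is monic, and since $\omega$ annihilates $\cok f$ and $\cok h$ it annihilates $\cok d$. (Equivalently, one dualizes the element chase of Proposition~\ref{pushout}: for $(b,c')\in D_0$ choose $a\in G$, $c\in G''$ with $f(a)=\omega b$ and $h(c)=\omega c'$, and use that $g$ is monic together with $g\varphi_1(a)=\varphi_0 f(a)=\omega\varphi_0(b)=\omega\theta_0(c')=\theta_0 h(c)=g\theta_1(c)$ to conclude $(a,c)\in D_1$ and $d(a,c)=\omega(b,c')$.) Finally, $(D_1\st{d}\rt D_0)$ together with $\pi$ and $\rho$ is the pull-back in $\mon(\omega, \G)$: given a test object $(X_1\st{x}\rt X_0)\in\mon(\omega,\G)$ with compatible morphisms into $(G\st{f}\rt P)$ and $(G''\st{h}\rt P'')$, the componentwise universal property in the category of $S$-modules produces unique $S$-maps $X_1\rt D_1$ and $X_0\rt D_0$, which commute with $x$ and $d$ by the uniqueness clause in the universal property of $D_0$, hence form the unique morphism in $\mon(\omega,\G)$ required.

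I do not expect a genuine obstacle, the whole argument being formally dual to Proposition~\ref{pushout}. The two places deserving care are the canonical identification of $\Ker\rho_i$ with $L_i$ (which is what legitimizes the short exact sequences placing $D_1\in\G(S)$ and $D_0\in\cp(S)$) and the deduction that $\omega$ kills $\cok d$: this should proceed through the sequence $0\rt\cok d\rt\cok f\oplus\cok h\rt\cok g\rt 0$ (or the dual element chase), not through a naive snake lemma comparing $0\rt L_1\rt D_1\rt G''\rt 0$ with $0\rt L_0\rt D_0\rt P''\rt 0$, which would only yield annihilation by $\omega^2$.
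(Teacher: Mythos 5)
Your proof is correct and is exactly the dualization of Proposition~\ref{pushout} that the paper alludes to when it skips the proof: componentwise pull-backs, closure of $\G(S)$ and $\cp(S)$ under extensions via the kernel $(L_1\st{l}\rt L_0)$ of $\varphi$, and a check that $\cok d$ is killed by $\omega$. You also correctly flag the one subtle point --- that $\omega\cok d=0$ must come from embedding $\cok d$ into $\cok f\oplus\cok h$ (or the dual element chase), not from extending $\cok l$ by $\cok h$, which in view of Example~\ref{non} would only give annihilation by $\omega^2$.
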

Now we are ready to state the result below.
\begin{prop}\label{sexact} $\mon(\omega, \G)$ is an exact category.
\end{prop}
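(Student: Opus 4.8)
The plan is to verify the six axioms $(E0)$, $(E0^{\op})$, $(E1)$, $(E1^{\op})$, $(E2)$, $(E2^{\op})$ from the definition of an exact category, using as the proposed conflations precisely those short exact sequences $0\to X'\to X\to X''\to 0$ in $\mathsf{Mor}(S)$ whose three terms all lie in $\mon(\omega,\G)$. Since $\mathsf{Mor}(S)$ is abelian and $\mon(\omega,\G)$ is a full additive subcategory, each such sequence is automatically a kernel-cokernel pair in $\mon(\omega,\G)$; so the content is the closure properties $(E0)$–$(E2^{\op})$. Axioms $(E0)$ and $(E0^{\op})$ are immediate: for any object $C$, the sequence $0\to 0\to C\xrightarrow{\id}C\to 0$ (resp. $0\to C\xrightarrow{\id}C\to 0\to 0$) has all terms in $\mon(\omega,\G)$ since the zero object is in $\mon(\omega,\G)$, exhibiting $\id_C$ as both an admissible monic and an admissible epic.

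Next I would dispatch the remaining four axioms by invoking the results already established in this section. Axiom $(E1)$, closure of admissible monics under composition, is exactly the statement of Lemma \ref{admono}: if $\varphi\colon A\to B$ and $\theta\colon B\to C$ are admissible monics in $\mon(\omega,\G)$ (i.e.\ injective in $\mathsf{Mor}(S)$ with cokernels in $\mon(\omega,\G)$), then $\theta\varphi$ is injective and $\cok(\theta\varphi)\in\mon(\omega,\G)$, so $\theta\varphi$ is again an admissible monic. Axiom $(E2)$, that the push-out of an admissible monic along an arbitrary morphism exists and is again an admissible monic, is precisely Proposition \ref{pushout}: it not only produces the push-out object inside $\mon(\omega,\G)$ but also exhibits the resulting commutative square as one in which the new horizontal map has cokernel $(L_1\to L_0)=\cok\varphi\in\mon(\omega,\G)$, hence is an admissible monic. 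Dually, Remark \ref{epi} gives closure of $\mon(\omega,\G)$ under kernels of epimorphisms, and a routine dualization of Lemma \ref{admono} yields $(E1^{\op})$ (closure of admissible epics under composition), while Proposition \ref{pullback} gives $(E2^{\op})$ (the pull-back of an admissible epic along an arbitrary morphism exists and is an admissible epic). I would also remark that, by the axiom-reduction results in \cite{buhler2010exact} (e.g.\ the fact that in the presence of $(E0)$–$(E1^{\op})$ some of the push-out/pull-back axioms are partially redundant), one could shorten this, but the cleanest exposition is to cite each established result against its corresponding axiom.

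The one point requiring a little care—and the only real obstacle—is the definition of the class $\E$ itself and the fact that it is closed under isomorphism and contains the split conflations, since $\mon(\omega,\G)$ is \emph{not} extension-closed in $\mathsf{Mor}(S)$ (Example \ref{non}); so one cannot simply invoke the recognition principle \cite[Lemma 10.20]{buhler2010exact}. Concretely, I must check that whenever $X',X''\in\mon(\omega,\G)$ and $X\in\mathsf{Mor}(S)$ fits in a split short exact sequence $0\to X'\to X\to X''\to 0$, then $X\cong X'\oplus X''$ lies in $\mon(\omega,\G)$ — this holds because $\G(S)$ and $\cp(S)$ are closed under finite direct sums and the cokernel of the componentwise map is $\cok f'\oplus\cok f''$, annihilated by $\omega$. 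Thus $\E$ contains all split sequences with outer terms in $\mon(\omega,\G)$, in particular the trivial ones needed for $(E0)$, $(E0^{\op})$. Invariance of $\E$ under isomorphism in $\mathsf{Mor}(S)$ (restricted to diagrams with all terms in $\mon(\omega,\G)$) is formal. Having assembled these observations together with Lemma \ref{admono}, Remark \ref{epi}, Proposition \ref{pushout}, and Proposition \ref{pullback}, every axiom $(E0)$–$(E2^{\op})$ holds, so $(\mon(\omega,\G),\E)$ is an exact category, completing the proof.
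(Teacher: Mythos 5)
Your proposal is correct and follows essentially the same route as the paper: the conflations are the short exact sequences in $\mathsf{Mor}(S)$ with all terms in $\mon(\omega,\G)$, closure under isomorphism and the trivial sequences give $(E0)$ and $(E0^{\op})$, and Lemma \ref{admono}, Remark \ref{epi}, Proposition \ref{pushout}, and Proposition \ref{pullback} are cited against $(E1)$, $(E1^{\op})$, $(E2)$, and $(E2^{\op})$ respectively. Your extra remarks on split sequences and the failure of extension-closedness only elaborate on points the paper passes over as obvious.
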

\begin{proof}First one should note that the class of short exact sequences in $\mathsf{Mor}(S)$ with terms in $\mon(\omega, \G)$ is closed under isomorphisms. Moreover, axioms $(E0)$ and $(E0^{op})$ are obviously satisfied. The validity of axioms $(E1)$ comes from Lemma \ref{admono} and axiom $(E1^{op})$ is true, because of Remark \ref{epi}. Furthermore, axiom $(E2)$ is satisfied, thanks to Proposition \ref{pushout}. Finally, axiom $(E2^{op})$ follows from Proposition \ref{pullback}. So the proof is finished.
\end{proof}

\begin{cor}\label{ccor}$\mon(\omega, \cp)$ is an extension-closed exact subcategory of $\mon(\omega, \G)$.
\end{cor}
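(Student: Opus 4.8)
The plan is to show that $\mon(\omega, \cp)$ is extension-closed inside the exact category $\mon(\omega, \G)$ and then invoke the basic recognition principle (\cite[Lemma 10.20]{buhler2010exact}) recalled in the text: an extension-closed full additive subcategory of an exact category inherits an exact structure with conflations the conflations of the ambient category having all three terms in the subcategory. So the work reduces to a single closure statement.

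First I would note that $\mon(\omega, \cp)$ is a full additive subcategory of $\mon(\omega, \G)$: it is full by definition, it contains the zero object, and it is closed under finite direct sums because $\cp(S)$ is. Next, take a conflation in $\mon(\omega, \G)$,
\[
0\lrt (P_1\st{f_1}\rt Q_1)\lrt (G_2\st{f_2}\rt Q_2)\lrt (P_3\st{f_3}\rt Q_3)\lrt 0,
\]
whose two outer terms lie in $\mon(\omega, \cp)$; I must show the middle term does too, i.e. that $G_2\in\cp(S)$ (the module $Q_2$ is automatically projective, being an extension of $Q_3$ by $Q_1$ and $\cp(S)$ being closed under extensions, and $f_2$ is already a monomorphism with $\omega$-torsion cokernel by virtue of lying in $\mon(\omega,\G)$). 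Unravelling the conflation in $\mathsf{Mor}(S)$ gives a short exact sequence of $S$-modules $0\rt P_1\rt G_2\rt P_3\rt 0$ in the top row. Since $P_1$ and $P_3$ are projective, this sequence splits, whence $G_2\cong P_1\oplus P_3$ is projective. Therefore the middle term lies in $\mon(\omega, \cp)$, proving that $\mon(\omega, \cp)$ is extension-closed in $\mon(\omega, \G)$.

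The only mild subtlety — and the step I expect to need the most care — is confirming that the conflations of the inherited exact structure on $\mon(\omega,\cp)$ are exactly what one wants, namely the short exact sequences in $\mathsf{Mor}(S)$ with all terms in $\mon(\omega,\cp)$, rather than some a priori smaller class; but this is precisely the content of the recognition principle, since every admissible monic (resp. epic) of $\mon(\omega,\G)$ with outer terms in $\mon(\omega,\cp)$ remains admissible after restriction. Finally, since $\mon(\omega,\cp)\subseteq\mon(\omega,\G)$ is extension-closed and the latter is exact by Proposition \ref{sexact}, the corollary follows.
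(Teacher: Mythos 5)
Your proposal is correct and follows essentially the same route as the paper: reduce to the single closure statement that the middle term of a conflation with outer terms in $\mon(\omega, \cp)$ has projective first component (via the short exact sequence $0\rt P_1\rt G_2\rt P_3\rt 0$, which splits), and then invoke \cite[Lemma 10.20]{buhler2010exact} to transfer the exact structure from $\mon(\omega, \G)$.
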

\begin{proof}Assume that $0\lrt (G'\st{g'}\rt P')\lrt (G\st{g}\rt P)\lrt (G''\st{g''}\rt P'')\lrt 0$ is a short exact sequence in $\mon(\omega, \G)$ such that $G',G''\in\cp(S)$. This gives us the short exact sequence of $S$-modules $0\rt G'\rt G\rt G''\rt 0$. Since $G'$ and $G''$ are projective, the same is true for $G$, and so, $(G\st{g}\rt P)\in\mon(\omega, \cp)$. This means that $\mon(\omega, \cp)$ is an extension-closed subcategory of $\mon(\omega, \G)$. Thus, the exactness of $\mon(\omega, \G)$ is inherited by $\mon(\omega, \cp)$, see \cite[Lemma 10.20]{buhler2010exact}. So the proof is completed.
\end{proof}
In what follows, we show that $\mon(\omega, \G)$ is a Frobenius category. An exact category $\A$ is called a Frobenius category, if it has enough projectives and injectives and the projectives coincide with the injectives. Here
we state a couple of elementary lemmas.

\begin{lemma}\label{1.1}For a given commutative diagram of $S$-modules\[\xymatrix{&\\ E:0 \ar[r] & M' \ar[r] \ar[d]_{\alpha} & M \ar[r] \ar[d]_{\beta} & M'' \ar[r] \ar[d]_{\gamma} & 0\\E': 0 \ar[r] & N' \ar[r] & N \ar[r] & N'' \ar[r] & 0}\] with exact rows, there exists a commutative diagram\[\xymatrix{&\\ E: 0 \ar[r] & M' \ar[r] \ar[d]_{\alpha} & M \ar[r] \ar[d]_{\beta_1} & M''\ar[r] \ar@{=}[d] & 0
\\ E'': 0 \ar[r] & N'\ar@{=}[d] \ar[r] & T \ar[r]\ar[d]_{\beta_2} & M'' \ar[r]\ar[d]_{\gamma} & 0\\ E': 0 \ar[r] & N' \ar[r] & N \ar[r] & N'' \ar[r] & 0,}\]such that $E''$ is exact and $\beta_2\beta_1=\beta$.
\end{lemma}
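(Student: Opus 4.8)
The plan is to construct $T$ as a pullback in the abelian category of $S$-modules, which makes the compatibility with $E'$ transparent. Write the two given rows as $E:0\to M'\xrightarrow{\iota}M\xrightarrow{q}M''\to0$ and $E':0\to N'\xrightarrow{\iota'}N\xrightarrow{q'}N''\to0$. I would put
\[
T:=N\times_{N''}M''=\{(n,m'')\in N\oplus M''\ :\ q'(n)=\gamma(m'')\},
\]
with $\beta_2\colon T\to N$ and $\pi\colon T\to M''$ the two projections. Since $q'$ is an epimorphism, so is $\pi$, and $\Ker\pi=\{(n,0):q'(n)=0\}\cong N'$ via $n'\mapsto(\iota'(n'),0)$; this yields the exact sequence $E'':0\to N'\to T\xrightarrow{\pi}M''\to0$.

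To obtain $\beta_1$, I would use that the given diagram commutes, so $q'\beta=\gamma q\colon M\to N''$; the universal property of the pullback then provides a unique $\beta_1\colon M\to T$ with $\beta_2\beta_1=\beta$ and $\pi\beta_1=q$. The first identity is exactly the required relation $\beta_2\beta_1=\beta$.

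It then remains to check that the two ladders in the statement commute. For the ladder $E\to E''$: the right square is the identity $\pi\beta_1=q$; for the left square one checks $\beta_1\iota=(\iota'(\,\cdot\,),0)\circ\alpha$ by comparing components after the monomorphism $T\hookrightarrow N\oplus M''$ — the $M''$-component of both sides is $q\iota=0$, while the $N$-component is $\beta\iota=\iota'\alpha$ on the left (commutativity of the original left square) and $\iota'\alpha$ on the right. For the ladder $E''\to E'$: the left square is $\beta_2(\iota'(n'),0)=\iota'(n')$, immediate from the definition of $\beta_2$, and the right square is $\gamma\pi=q'\beta_2$, which is precisely the defining relation of the fibre product $T$. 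Thus every square commutes, $\beta_2\beta_1=\beta$, and the three columns compose to $\alpha$, $\beta$, $\gamma$, as desired.

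I do not expect a genuine obstacle: the only mildly delicate point is the identification $\Ker\pi\cong N'$ together with the verification that the induced left-hand map of $E''$ is simultaneously compatible with both ladders, and this is settled by the componentwise computation above. (Dually, one could take $T$ to be the pushout $N'\oplus_{M'}M$ of $E$ along $\alpha$; the hypothesis that $\beta$ completes the outer diagram forces $\gamma^{*}[E']=\alpha_{*}[E]$ in $\Ext^{1}_{S}(M'',N')$, so the two models of $T$ coincide, but the pullback picture is the cleanest for reading off the map $\beta_2$ into $E'$.)
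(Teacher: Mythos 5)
Your pullback construction of $T=N\times_{N''}M''$ is correct and complete; the componentwise checks of the two ladders and of $\beta_2\beta_1=\beta$ all go through. The paper itself offers no argument but only cites Mitchell and B\"uhler, and the proof given in those references is exactly this factorization through the pullback of $E'$ along $\gamma$ (equivalently the pushout of $E$ along $\alpha$), so your approach coincides with the intended one.
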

\begin{proof}See \cite[Lemma 1.1, page 163]{mitchell1965theory} and also \cite[Proposition 3.1]{buhler2010exact}.
\end{proof}

\begin{lemma}\label{split}Let $0\rt M'\st{f}\rt M\st{g}\rt M''\rt 0$ be a short exact sequence of $S$-modules such that $\omega M''=0$. Then the push-out of this sequence along the multiplicative morphism $M'\st{\omega}\rt M'$ is split.
\end{lemma}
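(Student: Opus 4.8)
The plan is to recognise the push-out sequence as the image of the extension class of the given sequence under multiplication by $\omega$, and then to observe that this multiplication is zero because $\omega$ annihilates $M''$.

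First I would set up the homological reformulation. The short exact sequence $\xi\colon 0\rt M'\st{f}\rt M\st{g}\rt M''\rt 0$ represents a class $[\xi]\in\Ext^1_S(M'',M')$, and for any homomorphism $\alpha\colon M'\rt N'$ the push-out of $\xi$ along $\alpha$ is a short exact sequence $0\rt N'\rt T\rt M''\rt 0$ whose class in $\Ext^1_S(M'',N')$ equals $\alpha_*[\xi]$; moreover such a sequence is split if and only if its class vanishes. Taking $\alpha=\omega\cdot\id_{M'}$, for which $\alpha_*$ is multiplication by $\omega$ on $\Ext^1_S(M'',M')$, it therefore suffices to show that $\omega\cdot[\xi]=0$.

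For this I would invoke the standard fact that, for every $S$-module $L$ and every $i\geq 0$, the ideal $\Ann_S(M'')$ annihilates $\Ext^i_S(M'',L)$: multiplication by $r\in S$ on $\Ext^i_S(M'',L)$ is induced equally by $r\cdot\id_{M''}$ and by $r\cdot\id_L$, so it is the zero map whenever $rM''=0$. Since $\omega M''=0$ by hypothesis, this gives $\omega\cdot\Ext^1_S(M'',M')=0$, hence $\omega\cdot[\xi]=0$, and the push-out sequence is split.

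If one prefers an elementary, self-contained argument, the section can be written down by hand: realise the push-out as $T=(M'\oplus M)/\{(\omega a,-f(a)):a\in M'\}$ with structure maps $a'\mapsto\overline{(a',0)}$ and $\overline{(a',m)}\mapsto g(m)$, and define $s\colon M''\rt T$ by $s(x)=\overline{(-m',m)}$, where $m\in M$ is any preimage of $x$ under $g$ and $m'\in M'$ is the unique element with $f(m')=\omega m$ --- which exists since $\omega x=0$ forces $\omega m\in\Ker g=\im f$ and $f$ is injective. One then checks that $s$ is well defined (independent of the choice of $m$), is $S$-linear, and is a section of the deflation $\overline{(a',m)}\mapsto g(m)$; hence the push-out sequence splits. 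I do not anticipate a genuine obstacle here: the whole content is the identification of the push-out class with $\omega\cdot[\xi]$ together with the vanishing of multiplication by $\omega$ on $\Ext^1_S(M'',M')$ --- or, in the concrete version, the well-definedness of $s$, which rests on exactly the same push-out relation $\overline{(\omega a,0)}=\overline{(0,f(a))}$.
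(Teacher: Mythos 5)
Your proof is correct and is essentially the same argument as the paper's: the paper applies the factorization lemma (Lemma \ref{1.1}) to the endomorphism of the sequence given by multiplication by $\omega$ on all three terms, thereby identifying the push-out of $\xi$ along $\omega\cdot\id_{M'}$ with the pull-back of $\xi$ along $\omega\cdot\id_{M''}$, which splits because $\omega\cdot\id_{M''}=0$ --- precisely your observation that multiplication by $\omega$ on $\Ext^1_S(M'',M')$ is induced equally on either variable and hence vanishes when $\omega M''=0$. Your elementary variant with the explicit section $s$ is also correct and well defined.
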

\begin{proof}Consider the following commutative diagram with exact rows:
\[\xymatrix{&\\ 0 \ar[r] & M' \ar[r]^{f} \ar[d]_{\omega} & M \ar[r]^{g} \ar[d]_{\omega} & M'' \ar[r] \ar[d]_{\omega} & 0\\ 0 \ar[r] & M' \ar[r]^{f} & M \ar[r]^{g} & M'' \ar[r] & 0.}\]So, applying Lemma \ref{1.1} gives us the following commutative diagram with exact rows:
\[\xymatrix{&\\ 0 \ar[r] & M' \ar[r]^{f} \ar[d]_{\omega} & M \ar[r]^{g} \ar[d]_{h_1} & M''\ar[r] \ar@{=}[d] & 0
\\ 0 \ar[r] & M'\ar@{=}[d] \ar[r] & T \ar[r]\ar[d]_{h_2} & M'' \ar[r]\ar[d]_{\omega} & 0\\ \eta:0 \ar[r] & M' \ar[r]^{f} & M \ar[r]^{g} & M'' \ar[r] & 0,}\]such that $h_2h_1=\omega.\id_{M}$. Since $\omega M''=0$, the middle row, which is also the push-out of the top row along the morphism $M'\st{\omega}\rt M'$, will be split. So the proof is finished.
\end{proof}

\begin{lemma}\label{proj}Let $Q$ be a projective $S$-module. Then $(Q\st{\id}\rt Q)$ and $(Q\st{\omega}\rt Q)$ are projective and injective objects in $\mon(\omega, \G)$.
\end{lemma}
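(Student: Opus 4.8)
The plan is to verify two things for each of the two objects $(Q\st{\id}\rt Q)$ and $(Q\st{\omega}\rt Q)$: first that they genuinely lie in $\mon(\omega,\G)$, and then that they are both projective and injective with respect to the exact structure established in Proposition \ref{sexact}. Membership is immediate: $\id_Q$ is mono with zero cokernel, and $Q\st{\omega}\rt Q$ is mono because $\omega$ is a non-zerodivisor on the free (hence torsion-free) module $Q$, with cokernel $Q/\omega Q$, which is visibly an $R$-module. Since $Q$ is projective, both sit in $\mon(\omega,\cp)\subseteq\mon(\omega,\G)$.

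For projectivity, I would take an arbitrary deflation (admissible epic) $\pi\colon(G\st{g}\rt P)\lrt(G'\st{g'}\rt P')$ in $\mon(\omega,\G)$ and a morphism from one of our objects into $(G'\st{g'}\rt P')$, and lift it. A deflation here is, by definition, a short exact sequence in $\Mor(S)$ with all three terms in $\mon(\omega,\G)$; in particular the component maps $\pi_1\colon G\twoheadrightarrow G'$ and $\pi_0\colon P\twoheadrightarrow P'$ are surjective. A morphism $(Q\st{\id}\rt Q)\to(G'\st{g'}\rt P')$ is determined by its top component $\psi_1\colon Q\to G'$ (the bottom one being $g'\psi_1$); using projectivity of $Q$, lift $\psi_1$ through $G\twoheadrightarrow G'$ to get $\widetilde\psi_1\colon Q\to G$, and then $(\widetilde\psi_1, g\widetilde\psi_1)$ is the desired lift. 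For $(Q\st{\omega}\rt Q)$, a morphism into $(G'\st{g'}\rt P')$ consists of a pair $(\psi_1,\psi_0)$ with $\psi_0\omega=g'\psi_1$; lift $\psi_0\colon Q\to P'$ through $P\twoheadrightarrow P'$ to $\widetilde\psi_0\colon Q\to P$, set $\widetilde\psi_1:=g\,?$ — more carefully, one checks $g'(\pi_1 - \text{correction})$ vanishes and uses that $g$ is mono together with exactness to produce $\widetilde\psi_1$; alternatively lift $\psi_1$ first through $G\twoheadrightarrow G'$ to some $\widetilde\psi_1^{0}$, correct $\widetilde\psi_0$ by an element of $\Hom(Q,P')$ lifted into $\ker$, and use that $\omega$ annihilates the relevant cokernels to make the square $g\widetilde\psi_1=\widetilde\psi_0\,\omega$ commute. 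This diagram-chase is the one place requiring genuine care.

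For injectivity, I would dualize: take an inflation (admissible monic) $(G'\st{g'}\rt P')\hookrightarrow(G\st{g}\rt P)$ whose cokernel $(C_1\st{c}\rt C_0)$ lies in $\mon(\omega,\G)$, and extend a morphism from $(G'\st{g'}\rt P')$ to one of our objects over this inflation. Here Lemma \ref{split} is the crucial tool: for the object $(Q\st{\omega}\rt Q)$, given a morphism $\psi=(\psi_1,\psi_0)$ out of $(G'\st{g'}\rt P')$, I would push out the two short exact sequences $0\to G'\to G\to C_1\to 0$ and $0\to P'\to P\to C_0\to 0$ along $\psi_1$ and $\psi_0$ respectively (landing in $Q$), and since $\omega$ kills $C_1$ and $C_0$, the further push-out along multiplication by $\omega$ splits by Lemma \ref{split}, which yields the required extension; naturality of the construction (compatibility of the two levels via the maps $g$, $g'$, $c$, $\omega$) gives a morphism of objects. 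For $(Q\st{\id}\rt Q)$ one argues similarly but only needs the cokernel $C_1$, which is already an $R$-module annihilated by $\omega$, so again Lemma \ref{split} applies directly to the top row; the bottom row extension is then forced by the commuting square.

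The main obstacle I anticipate is not any single deep point but the bookkeeping of compatibility between the two rows (the $G$-level and the $P$-level) throughout the lifting/extension arguments: one must ensure the lift or extension produced at the projective level is compatible with the structure map ($\id$ or $\omega$) to the other level, and for injectivity that the two applications of Lemma \ref{split} glue into a genuine morphism in $\Mor(S)$. That compatibility is exactly where one uses that $\omega$ annihilates the cokernels appearing in conflations of $\mon(\omega,\G)$, so it is precisely the defining condition of the category that makes the argument work. Finally, having enough projectives and injectives for the category as a whole is not needed for this lemma in isolation, but I would note that $(Q\st{\id}\rt Q)$ and $(Q\st{\omega}\rt Q)$ are the building blocks from which that will follow in the sequel.
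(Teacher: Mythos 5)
Your membership check and your projectivity argument for $(Q\st{\id}\rt Q)$ are fine, and for the projectivity of $(Q\st{\omega}\rt Q)$ you do gesture at the correct key point (lift $\psi_0$ to $\widetilde\psi_0:Q\rt P$; then $\widetilde\psi_0\omega$ composed with $P\rt\cok g$ vanishes because $\omega\cok g=0$, so it factors as $g\widetilde\psi_1$, and compatibility with $\psi_1$ follows since $g'$ is mono). You leave that chase explicitly unfinished with two half-sketches, which is recoverable; this is essentially the paper's argument.

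The injectivity half, however, contains a genuine error. You propose to apply Lemma \ref{split} to the rows $0\rt G'\rt G\rt C_1\rt 0$ and $0\rt P'\rt P\rt C_0\rt 0$ of the conflation, ``since $\omega$ kills $C_1$ and $C_0$.'' It does not: the cokernel object $(C_1\st{c}\rt C_0)$ lying in $\mon(\omega,\G)$ means $C_1\in\G(S)$, $C_0\in\cp(S)$ and $\omega\cok(c)=0$ --- the modules $C_1$ and $C_0$ themselves are (Gorenstein) projective, hence $\omega$ is a non-zerodivisor on them, and $\omega C_i=0$ would force $C_i=0$. So Lemma \ref{split} simply does not apply to those two sequences, and the ``two push-outs glued by naturality'' plan collapses at its first step. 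The sequence to which Lemma \ref{split} must be applied is the structure map of the \emph{middle} object, $0\rt G\st{g}\rt P\rt\cok g\rt 0$, whose cokernel genuinely is annihilated by $\omega$. Concretely, for a conflation $0\rt(Q\st{\omega}\rt Q)\st{\varphi}\rt(G\st{g}\rt P)\rt(G'\st{g'}\rt P')\rt 0$ one first splits the top row $0\rt Q\rt G\rt G'\rt 0$ using $\Ext^1_S(G',Q)=0$ (because $G'$ is Gorenstein projective and $Q$ is projective --- an ingredient absent from your sketch) to get $\psi_1:G\rt Q$ with $\psi_1\varphi_1=\id_Q$, and then extends $\omega\psi_1$ along $g$ by pushing $0\rt G\st{g}\rt P\rt\cok g\rt 0$ out along $\omega\psi_1$ and invoking Lemma \ref{split}; this produces $\psi_0:P\rt Q$ with $\psi_0 g=\omega\psi_1$, and the retraction $(\psi_1,\psi_0)$ follows. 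Your treatment of $(Q\st{\id}\rt Q)$ has the same defect (you again assume $\omega C_1=0$); there the honest argument is even easier, since the bottom row $0\rt Q\rt P\rt P'\rt 0$ splits outright by projectivity of $P'$, and $\psi_1:=\psi_0 g$ does the rest.
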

\begin{proof}We deal only with the case $(Q\st{\omega}\rt Q)$, because the other one is obtained easily. Let us first examine the projectivity of $(Q\st{\omega}\rt Q)$. To this end, take a short exact sequence $0\lrt (G'\st{g'}\rt P')\lrt (G\st{g}\rt P)\st{\varphi=(\varphi_1,\varphi_0)}\lrt (Q\st{\omega}\rt Q)\lrt 0$ in $\mon(\omega, \G)$. Now projectivity of $Q$ gives us a morphism $\psi_0:Q\rt P$ with $\varphi_0\psi_0=\id_Q$. Since $\cok g$ is annihilated by $\omega$, one may find a morphism $\psi_1:Q\rt G$ making the following diagram commutative
{\footnotesize\[\xymatrix{ & Q\ar[d]^{\psi_0\omega}\ar[dl]_{\psi_1} & \\ G\ar[r]^{g}~& P\ar[r]& \cok g.}\]}{So, $\psi=(\psi_1, \psi_0):(Q\st{\omega}\rt Q)\lrt (G\st{g}\rt P)$ is a morphism in $\mon(\omega, \G)$. Consider the equalities: $\omega\varphi_1\psi_1=(\omega\varphi_1)\psi_1=(\varphi_0g)\psi_1=\varphi_0(g\psi_1)=\varphi_0\psi_0\omega=\omega\varphi_0\psi_0$. Now since $\omega$ is non-zerodivisor, we have that $\varphi_1\psi_1=\varphi_0\psi_0.$ Consequently, $\varphi\psi=(\varphi_1\psi_1, \varphi_0\psi_0)=(\id_Q, \id_Q)=\id_{(Q\st{\omega}\rt Q)}$, and so $(Q\st{\omega}\rt Q)$ is a projective object of $\mon(\omega, \G)$.} Next, we show that $(Q\st{\omega}\rt Q)$ is an injective object of $\mon(\omega, \G)$.
Assume that $0\lrt (Q\st{\omega}\rt Q)\st{\varphi}\lrt (G\st{g}\rt P)\lrt (G'\st{g'}\rt P')\lrt 0$ is a short exact sequence in $\mon(\omega, \G)$. Since $Q$ is an injective object in $\G(S)$, there is a morphism $\psi_1:G\rt Q$ such that $\psi_1\varphi_1=\id_Q$.
Consider the following push-out diagram:
\[\xymatrix{&\\ \gamma:0 \ar[r] & G \ar[r]^{g} \ar[d]_{\omega\psi_1} & P \ar[r] \ar[d] & \cok g \ar[r] \ar@{=}[d] & 0
\\ (\omega\psi_1)\gamma: 0 \ar[r] & Q \ar[r] & T \ar[r] & \cok g \ar[r] & 0. &}\]
By our hypothesis, $\omega\cok g=0$, and so, applying Lemma \ref{split} yields that $(\omega \psi_1)\gamma$ is split. Hence, there is a morphism $\psi_0:P\lrt Q$ such that $\psi_0g=\omega \psi_1$. Now it can be easily checked that $\psi\varphi=\id_{(Q\st{\omega}\rt Q)}$. Namely, $(Q\st{\omega}\rt Q)$ is an injective object of $\mon(\omega, \G)$, and so, the proof is completed.
\end{proof}

\begin{lemma}\label{si}For a given object $(G\st{g}\rt P)\in\mon(\omega, \G)$, there is a unique $S$-homomorphism $g_{\si}:P\rt G$ such that $\omega\cok g_{\si}=0$, $g_{\si}g=\omega.\id_{G}$ and $gg_{\si}=\omega.\id_{P}$.
\end{lemma}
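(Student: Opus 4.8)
The plan is to construct $g_{\si}$ by hand as the operation ``divide $\omega p$ by $g$''. Since $(G\st{g}\rt P)\in\mon(\omega, \G)$, we have a short exact sequence $0\rt G\st{g}\rt P\rt\cok g\rt 0$ of $S$-modules with $\omega\cok g=0$; hence for each $p\in P$ the element $\omega p$ maps to $0$ in $\cok g$, so $\omega p\in\imm g$, and since $g$ is injective it has a unique preimage, which I would define to be $g_{\si}(p)$. The first thing to check is that $p\mapsto g_{\si}(p)$ is $S$-linear, and this is immediate from the uniqueness of preimages under the monomorphism $g$: the elements $g_{\si}(p+sp')$ and $g_{\si}(p)+s\,g_{\si}(p')$ have the same image $\omega(p+sp')$ under $g$, hence coincide. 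So $g_{\si}:P\rt G$ is a well-defined $S$-homomorphism, and $g\,g_{\si}=\omega\cdot\id_{P}$ holds by construction.

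Next I would verify the remaining two identities. For $a\in G$ one has $g(g_{\si}g(a))=\omega g(a)=g(\omega a)$, so cancelling the monomorphism $g$ gives $g_{\si}g=\omega\cdot\id_{G}$. This same relation shows that $\omega a=g_{\si}(g(a))\in\imm g_{\si}$ for every $a\in G$, i.e. $\omega G\subseteq\imm g_{\si}$, whence $\omega\cok g_{\si}=0$. For uniqueness, if $g'_{\si}:P\rt G$ is another homomorphism with $g\,g'_{\si}=\omega\cdot\id_{P}$, then $g\,(g_{\si}-g'_{\si})=0$ and injectivity of $g$ forces $g_{\si}=g'_{\si}$; thus the three conditions determine the map uniquely.

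I do not anticipate any real obstacle: the argument is elementary diagram chasing. The only point that needs care is the well-definedness of $g_{\si}$, which genuinely uses both features built into the hypothesis $(G\st{g}\rt P)\in\mon(\omega, \G)$, namely that $g$ is monic (so preimages are unique) and that $\omega$ annihilates $\cok g$ (so $\omega p$ really has a preimage); in particular, neither the projectivity of $P$ nor the Gorenstein projectivity of $G$ is used anywhere in the proof.
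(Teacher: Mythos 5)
Your proof is correct. It takes a more direct route than the paper: the paper first produces $g_{\si}$ abstractly, by forming the push-out of $0\rt G\st{g}\rt P\rt\cok g\rt 0$ along the multiplication map $G\st{\omega}\rt G$ and invoking Lemma \ref{split} to split the resulting sequence, which yields the identity $g_{\si}g=\omega\cdot\id_G$ first and then deduces $gg_{\si}=\omega\cdot\id_P$ from the containment $\omega P\subseteq g(G)$. You instead build $g_{\si}$ pointwise as the unique $g$-preimage of $\omega p$, so that $gg_{\si}=\omega\cdot\id_P$ holds by construction and the other two properties follow by cancelling the monomorphism $g$. Both arguments ultimately rest on the same two facts --- $g$ is monic and $\omega$ kills $\cok g$ --- but yours dispenses with the push-out machinery, makes the uniqueness statement visibly stronger (the single condition $gg_{\si}=\omega\cdot\id_P$ already pins the map down, whereas the paper cites all three conditions), and makes explicit that neither the projectivity of $P$ nor the Gorenstein projectivity of $G$ enters. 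The paper's formulation has the mild advantage of reusing Lemma \ref{split}, which is needed elsewhere anyway (e.g.\ in Lemma \ref{proj}), but as a self-contained verification your version is cleaner.
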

\begin{proof}Take the following push-out diagram: \[\xymatrix{&\\ 0 \ar[r] & G \ar[r]^{g} \ar[d]_{\omega} & P \ar[r] \ar[d] & \cok g \ar[r] \ar@{=}[d] & 0\\ 0 \ar[r] & G \ar[r] & L \ar[r] & \cok g \ar[r] & 0. &}\]
As $\omega\cok g=0$, by Lemma \ref{split} the lower row is split, and so, there is a morphism $g_{\si}:P\rt G$ such that $g_{\si}g=\omega.\id_{G}$. Another use of the fact that $\omega\cok g=0$, leads us to infer that $\omega P\subseteq g(G)$. This fact besides the equality $g_{\si}g=\omega.\id_{G}$ would imply that $gg_{\si}=\omega.\id_{P}$. {It should be noted that, if there is another morphism $f: P\rt G$ satisfying the mentioned conditions, then we will have $gg_{\si}=gf$, and then, $g$ being a monomorphism ensures the validity of the equality $g_{\si}=f$.} Finally, we show that $\omega$ annihilates $\cok g_{\si}$. To see this, consider the short exact sequence of $S$-modules; $0\rt P\st{g_{\si}}\rt G\st{\pi}\rt\cok g_{\si}\rt 0$. For a given object $y\in\cok g_{\si}$, take $x\in G$ such that $\pi(x)=y$. Hence, $\omega y=\omega\pi(x)=\pi(\omega x)=\pi(g_{\si}g(x))=0$, meaning that $\omega\cok g_{\si}=0$. Thus the proof is completed.
\end{proof}

\begin{prop}\label{eno}The category $\mon(\omega, \G)$ has enough projective and injective objects.
\end{prop}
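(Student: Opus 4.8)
The plan is, for each object $(G\st{g}\rt P)$, to write down one conflation realising a projective cover and one realising an injective envelope.

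\medskip
\noindent\emph{Enough projectives.} Fix a surjection $\pi\colon F\twoheadrightarrow G$ with $F$ finitely generated free, and let $g_{\si}\colon P\rt G$ be as in Lemma \ref{si}, so $gg_{\si}=\omega\cdot\id_{P}$. Define $\psi=(\psi_1,\psi_0)\colon(P\st{\omega}\rt P)\oplus(F\st{\id}\rt F)\lrt(G\st{g}\rt P)$ by $\psi_1=(g_{\si}\ \ \pi)$ and $\psi_0=(\id_{P}\ \ g\pi)$. The relation $gg_{\si}=\omega\cdot\id_{P}$ is exactly what makes $\psi$ a morphism in $\Mor(S)$; both $\psi_1$ and $\psi_0$ are surjective (the first since $\im\pi=G$, the second from its $\id_{P}$ block), so $\psi$ is an epimorphism in $\Mor(S)$. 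By Remark \ref{epi} its kernel belongs to $\mon(\omega,\G)$, so $0\rt\Ker\psi\rt(P\st{\omega}\rt P)\oplus(F\st{\id}\rt F)\st{\psi}\rt(G\st{g}\rt P)\rt 0$ is a conflation whose middle term is projective by Lemma \ref{proj}. This half is routine.

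\medskip
\noindent\emph{Enough injectives.} The idea is to mirror the construction above. Since $G\in\G(S)$, choose a short exact sequence $0\rt G\st{\iota}\rt Q\rt G'\rt 0$ with $Q\in\cp(S)$ and $G'\in\G(S)$; note $\iota$ embeds $G$ in a free module, so $\omega$ is a non-zerodivisor on $G$ and on $Q$. Because $\omega\cdot\cok g=0$, the group $\Ext^1_S(\cok g,Q)$ is annihilated by $\omega$, hence the class of $\omega\iota=\iota\circ(\omega\cdot\id_G)$ in $\cok\Hom_S(g,Q)\hookrightarrow\Ext^1_S(\cok g,Q)$ vanishes, i.e.\ there is $q_0\colon P\rt Q$ with $q_0 g=\omega\iota$. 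Now form $\Lambda=(\Lambda_1,\Lambda_0)\colon(G\st{g}\rt P)\lrt(Q\st{\omega}\rt Q)\oplus(P\st{\id}\rt P)$ with $\Lambda_1=\binom{\iota}{g}$, $\Lambda_0=\binom{q_0}{\id_{P}}$. The relation $q_0 g=\omega\iota$ makes $\Lambda$ a morphism, and it is a monomorphism in $\Mor(S)$ since both components are. Its $\Mor(S)$-cokernel has the form $(C_1\st{c}\rt C_0)$ with $C_0\cong Q\in\cp(S)$, a short exact sequence $0\rt P\rt C_1\rt G'\rt 0$ forcing $C_1\in\G(S)$, and $\omega\cdot\cok c=0$ automatically; the middle term $(Q\st{\omega}\rt Q)\oplus(P\st{\id}\rt P)$ is injective by Lemma \ref{proj}. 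So everything reduces to showing $(C_1\st{c}\rt C_0)\in\mon(\omega,\G)$, i.e.\ that $c$ is monic.

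\medskip
\noindent\emph{The main obstacle.} This last point is where the real work lies. A snake-lemma chase on the two rows of $\Lambda$ identifies $\Ker c$ with the kernel of the map $\overline{q_0}\colon\cok g\rt Q/\omega Q$ induced by $q_0$, so I would need to arrange $\overline{q_0}$ injective. After enlarging $Q$ by a finitely generated free summand $Q_1$ and adjusting $\iota,q_0$ accordingly (which keeps $C_1$ Gorenstein projective and $C_0$ projective), it is enough to realise $\cok g$ as a submodule of a finitely generated free $R$-module. This is possible because $\cok g$ is torsionless over $R$: tensoring $0\rt G\st{g}\rt P\rt\cok g\rt 0$ with $R$ over $S$ and using $\Tor^S_1(\cok g,R)\cong\cok g$ (valid since $\omega\cdot\cok g=0$) yields an embedding $\cok g\hookrightarrow G\otimes_S R$, and $G\otimes_S R$ is a Gorenstein projective, hence reflexive and torsionless, $R$-module because $G$ is Gorenstein projective over $S$ and $\omega$ is a non-zerodivisor on $S$ and on $G$; a submodule of a torsionless module is torsionless. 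A more conceptual alternative would be to build the exact duality $\mon(\omega,\G)\st{\sim}\rt\mon(\omega,\G)^{\op}$ sending $(G\st{g}\rt P)$ first to $(P\st{g_{\si}}\rt G)$ (Lemma \ref{si}) and then to $\big(\Hom_S(G,S)\st{\Hom_S(g_{\si},S)}\lrt\Hom_S(P,S)\big)$, using reflexivity of Gorenstein projectives and the vanishing of $\Hom_S(-,S)$ and the $\omega$-torsion of $\Ext^1_S(-,S)$ on $\omega$-torsion modules, and then to transport the "enough projectives" statement across it.
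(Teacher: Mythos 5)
Your two constructions are, up to reordering direct summands, exactly the paper's: the projective cover $(P\st{\omega}\rt P)\oplus(F\st{\id}\rt F)\twoheadrightarrow(G\st{g}\rt P)$ is the one in the paper's proof, and your injective embedding $\Lambda$ into $(Q\st{\omega}\rt Q)\oplus(P\st{\id}\rt P)$ coincides with the paper's morphism $\varphi$, because the equation $q_0g=\omega\iota$ has a \emph{unique} solution: two solutions differ by a map factoring through $\cok g$, and $\Hom_S(\cok g,Q)=0$ since $\cok g$ is $\omega$-torsion while $Q$ is $\omega$-torsion-free; that unique solution is $q_0=\iota g_{\si}$ (the paper's $hg_{\si}$). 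The only divergence is in verifying that $\cok\Lambda$ lies in $\mon(\omega,\G)$. The paper does this by explicit computation: it chooses $\alpha:Q\rt P$ with $\alpha\iota=g$ (possible since $\Ext^1_S(G',P)=0$), identifies $\cok\Lambda$ with $(G'\oplus P\st{l}\rt Q)$, and produces $l_0:Q\rt G'\oplus P$ with $l_0l=\omega\oplus\omega$, which forces $l$ to be monic because $G'\oplus P$ is $\omega$-torsion-free.

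Your snake-lemma reduction of the monicity of $c$ to the injectivity of $\overline{q_0}:\cok g\rt Q/\omega Q$ is correct, but the ``main obstacle'' you then set out to overcome is illusory: for $q_0=\iota g_{\si}$ the map $\overline{q_0}$ is \emph{automatically} injective, with no enlargement of $Q$ needed. Indeed, if $q_0(y)=\omega q$ for some $q\in Q$, then $\omega q\in\iota(G)$, and since $G'=Q/\iota(G)$ is Gorenstein projective, hence a submodule of a free module on which $\omega$ is a non-zerodivisor, we get $q=\iota(z)$ for some $z\in G$; then $\iota(g_{\si}y)=\omega\iota(z)=\iota(g_{\si}g(z))$, and injectivity of $\iota$ and of $g_{\si}$ (which follows from $gg_{\si}=\omega\id_P$) yields $y=g(z)\in g(G)$. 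So your proof closes without the detour. The detour itself, via torsionlessness of $\cok g$ over $R$, can be made to work, but note a hidden constraint you pass over: after enlarging $Q$ to $Q\oplus Q_1$ you cannot ``adjust $q_0$'' independently, since the new $q_0$ is again forced by the new $\iota$ (by the same $\Hom_S(\cok g,Q_1)=0$ argument); the actual freedom lies in choosing $\iota_1:G\rt Q_1$, which you would obtain by lifting $P\rt\cok g\hookrightarrow Q_1/\omega Q_1$ to a map $P\rt Q_1$ and dividing its restriction to $g(G)$ by $\omega$. Since the direct argument above makes all of this unnecessary, I would drop the detour and the duality alternative altogether.
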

\begin{proof}Consider an arbitrary object $(G\st{g}\rt P)\in\mon(\omega, \G)$. In view of Lemma \ref{si}, there is a unique $S$-homomorphism $g_{\si}:P\rt G$ such that $gg_{\si}=\omega.\id_{P}$ and its cokernel is annihilated by $\omega$. So, taking an epimorphism $\pi:Q\rt G$ with $Q\in\cp(S)$, one may get an epimorphism $(Q\oplus P\st{\id\oplus\omega}\lrt Q\oplus P)\st{\varphi}\lrt (G\st{g}\rt P)$ in $\mon(\omega, \G)$, where $\varphi=(\varphi_1~~\varphi_0)$ with $\varphi_1=[\pi~~g_{\si}]$ and $\varphi_0=[g\pi~~\id]$. Now Remark \ref{epi} combined with Lemma \ref{proj} yields the desired result for the projective case. Now we focus on the injective case. Since $G\in\G(S)$, there exists a short exact sequence of $S$-modules $0\rt G\st{h}\rt Q\st{h'}\rt G'\rt 0$, where $Q$ is projective and $G'$ is Gorenstein projective. So one may obtain a monomorphism $(G\st{g}\rt P)\st{\varphi}\lrt(Q\oplus P\st{\omega\oplus\id}\lrt Q\oplus P)$ in $\mon(\omega, \G)$, where $\varphi_1=[h~~g]^t$ and $\varphi_0=[hg_{\si}~~\id]^t$. According to Lemma \ref{proj}, $(Q\oplus P\st{\omega\oplus\id}\lrt Q\oplus P)$ is an injective object of $\mon(\omega, \G)$. Hence, it remains to show that $\cok\varphi\in\mon(\omega, \G)$. Since $P$ is projective and $G'$ is Gorenstein projective, one may find a morphism $\alpha: Q\rt P$ with $\alpha h=g$. Thus we will have the equality $(\omega-hg_{\si}\alpha)h=0$, and this in turn, guarantees the existence of a morphism $\beta: G'\rt Q$ such that $\beta h'=\omega-hg_{\si}\alpha$. Now by letting $l=[\beta~~-hg_{\si}]$, one may deduce that $(G'\oplus P\st{l}\rt Q)$ is an object of $\mon(\omega, \G)$. In fact, considering the morphism $(Q\st{l_0}\rt G'\oplus P)$, where $l_0=[h'~~-\alpha]^t$, we have $l_0l=\omega\oplus\omega$, and in particular, $l$ is a monomorphism. Now it is easy to see that $$0\lrt(G\st{g}\rt P)\st{\varphi}\lrt(Q\oplus P\st{\omega\oplus\id}\lrt Q\oplus P)\st{\psi}\lrt (G'\oplus P\st{l}\rt Q)\lrt 0,$$with $\psi_0=[\id~~-hg_{\si}]$ and $\psi_1=\tiny {\left[\begin{array}{ll} h' & 0 \\ -\alpha & \id \end{array} \right]}$, is a short exact sequence of monomorphisms. Since $\cok(\omega\oplus\id)$ is annihilated by $\omega$, applying the snake lemma yields that the same is true for $\cok l$. Consequently, the latter sequence is a short exact sequence in $\mon(\omega, \G)$, and in particular, $\cok\varphi\in\mon(\omega, \G)$. Hence the proof is completed.
\end{proof}

\begin{lemma}\label{inj}
Each projective and injective object in $\mon(\omega, \G)$ is equal to direct summands of finite direct sums of objects of the form $(Q\st{\id}\rt Q)\oplus(P\st{\omega}\rt P)$ for some projective $S$-modules $P, Q$. In particular, an object is injective if and only if it is projective.
\end{lemma}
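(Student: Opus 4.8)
The plan is to use the explicit characterization of projective-injective objects together with the two constructions already in hand: the proof of Proposition \ref{eno} shows that every object embeds into an object of the form $(Q\oplus P\st{\omega\oplus\id}\rt Q\oplus P)$ and surjects from an object of the form $(Q\oplus P\st{\id\oplus\omega}\rt Q\oplus P)$, and both of these are, up to reordering summands, of the stated shape $(Q'\st{\id}\rt Q')\oplus(P'\st{\omega}\rt P')$. So the real content is: if $(G\st{g}\rt P)$ is projective (resp. injective) in $\mon(\omega,\G)$, then it is a direct summand of such an object.

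First I would handle the \emph{projective} case. Given $(G\st{g}\rt P)$ projective, Proposition \ref{eno} furnishes an admissible epic $\varphi\colon(Q\oplus P\st{\id\oplus\omega}\rt Q\oplus P)\rt(G\st{g}\rt P)$; since $(G\st{g}\rt P)$ is projective and this is a conflation, $\varphi$ splits, so $(G\st{g}\rt P)$ is a direct summand of $(Q\oplus P\st{\id\oplus\omega}\rt Q\oplus P)\cong(Q\st{\id}\rt Q)\oplus(P\st{\omega}\rt P)$. Dually, if $(G\st{g}\rt P)$ is injective, the admissible monic $\varphi\colon(G\st{g}\rt P)\rt(Q\oplus P\st{\omega\oplus\id}\rt Q\oplus P)$ of Proposition \ref{eno} splits, exhibiting $(G\st{g}\rt P)$ as a summand of $(Q\st{\omega}\rt Q)\oplus(P\st{\id}\rt P)$. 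This already gives the first sentence of the statement for each class separately; combined with Lemma \ref{proj} (which says objects of the stated form are \emph{both} projective and injective) one immediately concludes that an object is projective if and only if it is a summand of a finite sum of $(Q\st{\id}\rt Q)\oplus(P\st{\omega}\rt P)$'s, and likewise for injective — hence projective $\Leftrightarrow$ injective, which is the ``in particular'' clause and the Frobenius property.

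The one genuinely non-formal point is verifying that the middle terms produced in Proposition \ref{eno} really are isomorphic in $\mon(\omega,\G)$ to $(Q\st{\id}\rt Q)\oplus(P\st{\omega}\rt P)$ rather than merely having that underlying pair of modules: the map $\id\oplus\omega$ on $Q\oplus P$ is literally the direct sum morphism, so the identification is immediate, and similarly for $\omega\oplus\id$; I would just remark that reordering the summands is an isomorphism in $\mon(\omega,\G)$. The main obstacle I anticipate is purely bookkeeping — making sure the splitting of $\varphi$ is taken in $\mon(\omega,\G)$ and not just in $\Mor(S)$, which is automatic since projectivity/injectivity is a statement about conflations in $\mon(\omega,\G)$ itself, and the conflations in question were shown to lie in $\mon(\omega,\G)$ in the proof of Proposition \ref{eno}. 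No deeper input (e.g.\ about Gorenstein-projective modules beyond their closure properties, already used) is needed.
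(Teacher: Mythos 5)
Your proposal is correct and follows essentially the same route as the paper: the splitting of the epimorphism from the proof of Proposition \ref{eno} (and dually of the monomorphism for the injective case), combined with Lemma \ref{proj} for the ``in particular'' clause. The paper simply treats the projective case and dismisses the injective case as dual, whereas you spell both out; there is no substantive difference.
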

\begin{proof}Let us deal only with the projective case, because the other one is obtained dually. Take an arbitrary projective object $(G\st{g}\rt P)\in\mon(\omega, G)$. According to the proof of Proposition \ref{eno}, there is an epimorphism $(Q\oplus P\st{\id\oplus\omega}\lrt Q\oplus P)\st{\varphi}\lrt (G\st{g}\rt P)$ in $\mon(\omega, \G)$, with $Q$ projective. Since $(G\st{g}\rt P)$ is projective, $\varphi$ is a split epimorphism, and so, $(G\st{g}\rt P)$ will be a direct summand of $(Q\st{\id}\rt Q)\oplus (P\st{\omega}\rt P)$, giving the desired result. Moreover, the second assertion follows from the first assertion together with Lemma \ref{proj}. So the proof is finished.
\end{proof}

\begin{theorem}\label{frob}$\mon(\omega, \G)$ is a Frobenius category.
\end{theorem}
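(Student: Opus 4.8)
The plan is simply to assemble the results already established above, since by the definition recalled just before Lemma \ref{1.1} a Frobenius category is precisely an exact category with enough projective and enough injective objects in which the two classes coincide. No genuinely new argument is needed; the proof amounts to citing three earlier statements.

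First, equip $\mon(\omega, \G)$ with the class of conflations consisting of those short exact sequences in $\Mor(S)$ all of whose terms lie in $\mon(\omega, \G)$; Proposition \ref{sexact} shows this makes $\mon(\omega, \G)$ an exact category in the sense of Quillen. This is where the real work sits: as Example \ref{non} shows, $\mon(\omega, \G)$ is \emph{not} extension closed in $\Mor(S)$, so the axioms $(E1)$, $(E1^{op})$, $(E2)$, $(E2^{op})$ could not be inherited for free and had to be verified by hand in Lemma \ref{admono}, Remark \ref{epi}, Proposition \ref{pushout} and Proposition \ref{pullback}.

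Second, Proposition \ref{eno} shows that this exact category has enough projectives and enough injectives: for an arbitrary object $(G\st{g}\rt P)$, the morphism $g_{\si}$ produced by Lemma \ref{si} yields both a deflation onto $(G\st{g}\rt P)$ from the projective object $(Q\oplus P\st{\id\oplus\omega}\lrt Q\oplus P)$ and an inflation from $(G\st{g}\rt P)$ into the injective object $(Q\oplus P\st{\omega\oplus\id}\lrt Q\oplus P)$, for a suitable projective $Q$. Third, Lemma \ref{inj} identifies the projective objects and the injective objects of $\mon(\omega, \G)$ as one and the same class, namely the additive closure of the objects $(Q\st{\id}\rt Q)\oplus(P\st{\omega}\rt P)$ with $P,Q$ projective $S$-modules.

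Combining these three facts with the definition of a Frobenius category gives the theorem; as a consequence, by Happel's theorem the stable category $\umon(\omega, \G)$ carries a canonical triangulated structure. Since nothing remains to be checked, there is no real obstacle at this stage — the difficulty was entirely front-loaded into the preceding lemmas, in particular the direct verification of the exact-category axioms in the absence of extension-closedness and the construction of length-one projective and injective "resolutions" via the canonical map $g_{\si}$ of Lemma \ref{si}.
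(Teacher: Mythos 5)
Your proof is correct and follows exactly the paper's own argument: it assembles Proposition \ref{sexact} (exact structure), Proposition \ref{eno} (enough projectives and injectives), and Lemma \ref{inj} (projectives coincide with injectives). Nothing to add.
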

\begin{proof}According to Proposition \ref{sexact}, $\mon(\omega, \G)$ is an exact category. Moreover, this category has enough projective and injective objects by Proposition \ref{eno}. Finally, Lemma \ref{inj} reveals that projective objects are the same as injective objects in $\mon(\omega, \G)$. So the proof is finished.
\end{proof}

Assume that $\A$ is a Frobenius category and $\mathcal{B}$ an extension-closed exact subcategory of $\A$. Then $\mathcal{B}$ is said to be an {\em admissible subcategory}, if each object $B\in\mathcal{B}$ fits into conflations  $B\rt Q\rt B'$ and $B''\rt P\rt B$ in $\mathcal{B}$ such that $P, Q$ are projective objects of $\A$. One should note that, in this case, $\mathcal{B}$ is also a Frobenius category, and in particular, an object $B\in\mathcal{B}$ is projective if and only if it is projective as an object of $\A$,  see for example \cite[page 46]{chen2012three}.

\begin{cor}\label{s1}$\mon(\omega, \cp)$ is a Frobenius subcategory of $\mon(\omega, \G)$. In particular, they have the same projective objects.
\end{cor}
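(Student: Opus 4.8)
The plan is to deduce everything from the admissibility criterion recalled immediately before the statement. By Corollary \ref{ccor}, $\mon(\omega, \cp)$ is already an extension-closed exact subcategory of $\mon(\omega, \G)$, and $\mon(\omega, \G)$ is a Frobenius category by Theorem \ref{frob}. So it is enough to verify that $\mon(\omega, \cp)$ is an \emph{admissible} subcategory of $\mon(\omega, \G)$: granting this, \cite[page 46]{chen2012three} tells us that $\mon(\omega, \cp)$ is itself Frobenius and that an object of $\mon(\omega, \cp)$ is projective there exactly when it is projective in $\mon(\omega, \G)$, which is the ``same projective objects'' part of the statement.

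Thus the real task is: for each $(G\st{g}\rt P)\in\mon(\omega, \cp)$ (so now $G,P\in\cp(S)$), to produce a conflation $(G\st{g}\rt P)\rt J\rt C'$ and a conflation $C''\rt J'\rt(G\st{g}\rt P)$ in $\mon(\omega, \cp)$ with $J,J'$ projective objects of $\mon(\omega, \G)$. I would obtain both for free from the two constructions already carried out in the proof of Proposition \ref{eno}, by observing that when the left-hand module is projective those constructions stay inside $\mon(\omega, \cp)$. For the first conflation I would take the admissible monic $(G\st{g}\rt P)\hookrightarrow(Q\oplus P\st{\omega\oplus\id}\lrt Q\oplus P)$ built there, now choosing in that argument $Q=G$ and $h=\id_{G}$ — legitimate precisely because $G$ is projective, and then $G'=0$; its middle term is a projective object of $\mon(\omega, \G)$ by Lemma \ref{proj}, and its cokernel, as computed in that proof, becomes an object $(P\st{l}\rt G)$ whose two entries lie in $\cp(S)$ and whose cokernel is annihilated by $\omega$, hence an object of $\mon(\omega, \cp)$. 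For the second conflation I would take the admissible epic $(Q\oplus P\st{\id\oplus\omega}\lrt Q\oplus P)\twoheadrightarrow(G\st{g}\rt P)$ from the same proof with $Q\in\cp(S)$; by Remark \ref{epi} its kernel $(L_1\st{l}\rt L_0)$ lies in $\mon(\omega, \G)$, with $L_0\in\cp(S)$ automatically, and since $G$ is projective the epimorphism $\varphi_1\colon Q\oplus P\rt G$ splits, forcing $L_1$ to be a direct summand of the projective module $Q\oplus P$ and hence projective; so $(L_1\st{l}\rt L_0)\in\mon(\omega, \cp)$, and again the middle term is projective in $\mon(\omega, \G)$ by Lemma \ref{proj}.

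With these two conflations in hand, admissibility is established and the corollary follows. I do not expect a serious obstacle here; the only point that genuinely needs care is checking that the first entries of the kernel object $(L_1\st{l}\rt L_0)$ and of the cokernel object $(P\st{l}\rt G)$ are actually \emph{projective}, not merely Gorenstein projective — and this is exactly the place where the hypothesis $G\in\cp(S)$ enters, via the splitting of surjections onto projective modules.
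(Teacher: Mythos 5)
Your proposal is correct and follows essentially the same route as the paper: reduce to showing that $\mon(\omega, \cp)$ is an admissible subcategory of $\mon(\omega, \G)$ and then invoke \cite[page 46]{chen2012three}. The only (harmless) difference is that you exhibit the required conflations explicitly by specializing the constructions in the proof of Proposition \ref{eno} to the case $G\in\cp(S)$, whereas the paper takes arbitrary conflations with projective middle term supplied by the Frobenius property of $\mon(\omega, \G)$ and checks that their end terms land in $\mon(\omega, \cp)$ (using, on the injective side, that a Gorenstein projective module of finite projective dimension is projective — a fact your choice of conflation lets you bypass).
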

\begin{proof}
At first, one should note that by Corollary \ref{ccor}, $\mon(\omega, \cp)$ is an extension-closed exact subcategory of $\mon(\omega, \G).$ Take an arbitrary object $(P\st{f}\rt Q)\in\mon(\omega, \cp)$. Since $\mon(\omega, \G)$ is a Frobenius category, one may get short exact sequences $0\lrt(G\st{g}\rt P_1)\lrt(Q_1\st{h}\rt Q_1)\lrt (P\st{f}\rt Q)\lrt 0$ and $0\lrt (P\st{f}\rt Q)\lrt (Q'_1\st{h'}\rt Q'_1)\lrt (G'\st{g'}\rt P'_1)\lrt 0$ in $\mon(\omega, \G)$, such that the middle terms are projective objects of $\mon(\omega, \G)$. Now since $\cp(S)$ is closed under kernels of epimorphisms, $G$ will be a projective $S$-module. Also, by making use of the fact that any Gorenstein projective $S$-module of finite projective dimension is projective, we infer that $G'$ is projective. Consequently, these short exact sequences lie in the category $\mon(\omega, \cp)$, and in particular, $\mon(\omega, \cp)$ is an admissible subcategory of $\mon(\omega, \G)$. Thus, as mentioned above, $\mon(\omega, \cp)$ is a Frobenius category and its projective objects are the same as $\mon(\omega, \G)$. So the proof is finished.
\end{proof}

The significance of Frobenius categories lies in their natural connection to triangulated categories. Namely, if $\mathcal{A}$ is a Frobenius category, then the stable category $\underline{\mathcal{A}}$ is a triangulated category such that its shift functor is given by the inverse of the syzygy functor on $\underline{\mathcal{A}}$, see \cite{happel1988triangulated, keller1990chain}. These types of triangulated categories are referred to as “algebraic” in the context of Keller’s definition \cite{keller2006differential}.

\begin{remark}In \cite{bahlekeh2023homotopy}, a homotopy category $\HT\mon(\omega, \cp)$ of $\mon(\omega, \cp)$ has been introduced and studied. The objects of $\HT\mon(\omega, \cp)$ are the same as $\mon(\omega, \cp)$ and its morphism sets are morphism sets in $\mon(\omega, \cp)$ modulo null-homotopic.  A morphism  $\psi=(\psi_1, \psi_0):(P\st{f}\rt Q)\lrt (P'\st{f'}\rt Q')$ in $\mon(\omega, \cp)$ is said to be  null-homotopic, if there are  $S$-homomorphisms $s_1:P\rt Q'$ and $s_0:Q\rt P'$ such that $f'\psi_1-f's_0f=\omega. s_1$, or equivalently, $\psi_0f-f's_0f=\omega. s_1$, see \cite[Definition 2.2]{bahlekeh2023homotopy}. As shown in \cite[Proposition 2.12]{bahlekeh2023homotopy}, $\HT\mon(\omega, \cp)$ admits a natural structure of triangulated category. Moreover, in view of \cite[Proposition 3.2]{bahlekeh2023homotopy}, a morphism $\psi=(\psi_1, \psi_0)$ in $\mon(\omega, \cp)$ is null-homotopic if and only if it factors through a projective object of $\mon(\omega, \cp)$. This in particular reveals that the triangulated structures of $\HT\mon(\omega, \cp)$ and $\umon(\omega, \cp)$ coincide.
\end{remark}

\begin{theorem}The inclusion functor $i:\mon(\omega, \cp)\rt\mon(\omega, \G)$ induces a fully faithful triange functor $\underline{i}:\umon(\omega, \cp)\rt\umon(\omega, \G)$.
\end{theorem}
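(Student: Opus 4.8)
The plan is to verify the three things needed for $\underline{i}$ to be a well-defined fully faithful triangle functor: (a) $i$ is an exact functor between Frobenius categories that preserves projective-injective objects, so it descends to a functor $\underline{i}$ on stable categories that is automatically a triangle functor; (b) $\underline{i}$ is faithful; and (c) $\underline{i}$ is full. Step (a) is essentially bookkeeping given what is already established: by Corollary \ref{ccor} the inclusion $i$ sends conflations of $\mon(\omega,\cp)$ to conflations of $\mon(\omega,\G)$ (indeed $\mon(\omega,\cp)$ carries the exact structure inherited from $\mon(\omega,\G)$), and by Corollary \ref{s1} the two categories have the \emph{same} projective objects, so $i$ carries projective-injective objects to projective-injective objects. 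Hence $i$ preserves the ideal of maps factoring through projectives, induces $\underline{i}:\umon(\omega,\cp)\to\umon(\omega,\G)$, and commutes with the syzygy construction (a conflation $B''\to P\to B$ in $\mon(\omega,\cp)$ with $P$ projective stays such a conflation in $\mon(\omega,\G)$), hence with the shift functor and with triangles. This gives a triangle functor with no further work.

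For faithfulness (step (b)): suppose $\psi=(\psi_1,\psi_0):(P\st{f}\rt Q)\to(P'\st{f'}\rt Q')$ is a morphism in $\mon(\omega,\cp)$ whose image in $\umon(\omega,\G)$ is zero, i.e.\ $\psi$ factors through a projective object of $\mon(\omega,\G)$. Since, by Corollary \ref{s1}, every projective object of $\mon(\omega,\G)$ lies in $\mon(\omega,\cp)$ (both are summands of objects $(Q_0\st{\id}\rt Q_0)\oplus(P_0\st{\omega}\rt P_0)$ with $P_0,Q_0$ projective $S$-modules, by Lemma \ref{inj}), the factorization $\psi = \varphi'\circ\varphi$ with middle term projective already takes place inside $\mon(\omega,\cp)$. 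Therefore $\psi$ is zero in $\umon(\omega,\cp)$, which is faithfulness.

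For fullness (step (c)): given a morphism $\bar\psi:(P\st{f}\rt Q)\to(P'\st{f'}\rt Q')$ in $\umon(\omega,\G)$, lift it to $\psi=(\psi_1,\psi_0)$ in $\mon(\omega,\G)$. Here $\psi_1:P\to P'$ and $\psi_0:Q\to Q'$ with $\psi_0 f=f'\psi_1$, and since $P,Q,P',Q'$ are all \emph{projective} $S$-modules and $\psi$ is a genuine morphism of monomorphisms between projectives, $\psi$ already \emph{is} a morphism in $\mon(\omega,\cp)$; its image under $i$ followed by passage to $\umon(\omega,\G)$ is $\bar\psi$ by construction. This case is deceptively immediate precisely because the objects of $\mon(\omega,\cp)$ have the same underlying data in both categories and the morphism sets $\Mor_{\mon(\omega,\G)}$ restricted to objects of $\mon(\omega,\cp)$ coincide with $\Mor_{\mon(\omega,\cp)}$.

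The only genuinely substantive point—and the one I would expect a referee to scrutinize—is the claim in step (b) that a factorization through a projective of $\mon(\omega,\G)$ can be taken through a projective that lies in $\mon(\omega,\cp)$; this rests squarely on Lemma \ref{inj} (the explicit description of projective-injectives as summands of $(Q_0\st{\id}\rt Q_0)\oplus(P_0\st{\omega}\rt P_0)$) together with Corollary \ref{s1}. Once one grants that the two categories share the same projective objects and the same Hom-sets on the subcategory $\mon(\omega,\cp)$, both faithfulness and fullness are formal, and the triangle-functor structure follows from exactness plus preservation of projectives in the standard way for stable categories of Frobenius categories \cite{happel1988triangulated}.
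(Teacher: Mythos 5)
Your argument is correct and is essentially the paper's proof unpacked: the paper simply invokes Corollary \ref{s1} (the two Frobenius categories share the same projective objects) together with \cite[Lemma 2.8, page 23]{happel1988triangulated}, whose content is exactly the verification you carry out by hand --- descent to the stable categories, and fullness and faithfulness from the coincidence of projectives plus the fullness of the subcategory. One cosmetic point: for compatibility with the shift functor (the \emph{inverse} syzygy) you need the cosyzygy conflations $B\to I\to B'$ with $I$ injective to remain in $\mon(\omega,\cp)$, not the syzygy conflations you quote, but this is precisely the admissibility established in the proof of Corollary \ref{s1}.
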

\begin{proof}According to Corollary \ref{s1}, the Frobenius categories $\mon(\omega, \cp)$ and $\mon(\omega, \G)$ have the same projective objects. So applying \cite[ Lemma 2.8, page 23]{happel1988triangulated} yields that the inclusion functor $i:\mon(\omega, \cp)\lrt\mon(\omega, \G)$, induces a fully faithful triangle functor $\underline{i}:\umon(\omega, \cp)\lrt\umon(\omega, \G)$, as needed.
\end{proof}

\section{Stable category of monomorphisms and comparison with categories of singularities and D-branes of type B}

This section contains two important results. The first one indicates that the stable category $\umon(\omega, \cp)$ is triangle equivalent to the category of D-branes of type $B$, $\DB(\omega)$,
while the second one says that there are fully faithful triangle functors from the stable categories
$\umon(\omega, \G)$ and $\umon(\omega, \cp)$ to the singularity category $\ds(R)$ of $R$. Moreover, the density of these functors guarantees regularity of $S$ and Gorenstieness of $R$ (and so, $S$).
We begin by recalling the construction of the category D-branes of type B.

\begin{s}\label{matrix}We remind that the category of pairs $\Pair(\omega)$ is the category whose objects are ordered pairs ${\xymatrix{(P_1 \ar@<0.6ex>[r]^{\rho_1}& P_0\ar@<0.6ex>[l]^{\rho_0})}}$ in which $P_1$ and $P_0$ are projective $S$-modules and the compositions $\rho_0\rho_1$ and $\rho_1\rho_0$ are the multiplications by $\omega$, and a morphism $\Psi=(\psi_1, \psi_0) :{\xymatrix{(P_1 \ar@<0.6ex>[r]^{\rho_1}& P_0\ar@<0.6ex>[l]^{\rho_0})}}\rt {\xymatrix{(Q_1 \ar@<0.6ex>[r]^{q_1}& Q_0\ar@<0.6ex>[l]^{q_0})}}$ in $\Pair(\omega)$ is a pair of $S$-homomorphisms $\psi_1:P_1\rt Q_1$ and $\psi_0:P_0\rt Q_0$ such that $\psi_1\rho_0=q_0\psi_0$ and $q_1\psi_1=\psi_0\rho_1$. Moreover, the category $\DB(\omega)$, which is the same as the homotopy category of matrix factorizations, is the category whose objects are the same as objects of $\Pair(\omega)$, and its morphism sets are morphism sets in $\Pair(\omega)$ modulo null-homotopic. Recall that a morphism $\Psi=(\psi_1, \psi_0) :{\xymatrix{(P_1 \ar@<0.6ex>[r]^{\rho_1}& P_0\ar@<0.6ex>[l]^{\rho_0})}}\lra {\xymatrix{(Q_1 \ar@<0.6ex>[r]^{q_1}& Q_0\ar@<0.6ex>[l]^{q_0})}}$
is said to be {\em null-homotopic}, if there are $S$-homomorphisms $s_0:P_0\rt Q_1$ and $s_1:P_1\rt Q_0$ such that $\psi_0=q_1s_0+s_1\rho_0$ and $\psi_1=s_0\rho_1+q_0s_1$. The category $\DB(\omega)$ naturally gets a triangulated structure. Indeed, the exact triangles are those isomorphic to the standard triangles using mapping cones, see \cite{ bergh2015complete, orlov2003triangulated}.

It is known that $\Pair(\omega)$ is a Frobenius category, and in particular, its injective objects are those homotopic to zero pairs. This leads that the triangulated category $\underline{\Pair}(\omega)$ is nothing more than $\DB(\omega)$, see \cite{orlov2003triangulated}.

We should stress again that, the categories $\Pair(\omega)$ and $\DB(\omega)$ are the same as the category of matrix factorizations and the homotopy category of matrix factorizations, respectively. Matrix factorizations were introduced by Eisenbud in his 1980 paper \cite{eisenbud1980homological}, as a means of compactly describing the minimal free resolutions of maximal Cohen-Macaulay modules that have no free direct summands over a local hypersurface ring, see \cite{yoshino1990cohen}.
\end{s}

\begin{theorem}\label{sg}There is a triangle equivalence functor $\underline{F}:\umon(\omega, \cp)\lrt\underline{\Pair}(\omega)$.
\end{theorem}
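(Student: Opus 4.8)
The plan is to construct an explicit functor $F:\mon(\omega,\cp)\to\Pair(\omega)$ at the level of exact categories, check it is exact and sends projectives to projectives (hence injectives to injectives, since both categories are Frobenius with the same projective-injectives by Lemma~\ref{proj} and Lemma~\ref{inj}), and then verify it induces an equivalence on stable categories by exhibiting a quasi-inverse. The natural candidate is: given an object $(P\st{f}\rt Q)$ of $\mon(\omega,\cp)$, observe that $\cok f$ is annihilated by $\omega$, so the multiplication-by-$\omega$ map on $Q$ factors through $f$; more precisely, by (the argument of) Lemma~\ref{si} there is a unique $f_{\si}:Q\rt P$ with $f_{\si}f=\omega\cdot\id_P$ and $ff_{\si}=\omega\cdot\id_Q$. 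Thus $F(P\st{f}\rt Q):=\xymatrix{(P \ar@<0.6ex>[r]^{f}& Q\ar@<0.6ex>[l]^{f_{\si}})}$ is a genuine object of $\Pair(\omega)$, and on morphisms $F(\psi_1,\psi_0):=(\psi_1,\psi_0)$, which is well-defined because $\psi_0 f=f'\psi_1$ automatically forces $\psi_1 f_{\si}=f'_{\si}\psi_0$ (multiply by $f$ on the right, use that $f'$ is monic and $f_{\si},f'_{\si}$ are the canonical maps). Functoriality is then immediate.

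\textbf{Key steps, in order.} First I would establish the well-definedness and functoriality of $F$ on $\mon(\omega,\cp)$ as above, using uniqueness in Lemma~\ref{si} to get compatibility of the backward maps. Second, I would check $F$ is an exact functor: a conflation in $\mon(\omega,\cp)$ is a short exact sequence of pairs of $S$-modules with terms in $\mon(\omega,\cp)$, and applying $F$ yields a degreewise split-exact (in fact just short exact) sequence in $\Pair(\omega)$ that is again a conflation; one must only check the backward maps assemble compatibly, which again follows from uniqueness of $(-)_{\si}$. Third, I would compute $F$ on the projective-injective generators: $F(Q\st{\id}\rt Q)=\xymatrix{(Q \ar@<0.6ex>[r]^{\id}& Q\ar@<0.6ex>[l]^{\omega})}$ and $F(P\st{\omega}\rt P)=\xymatrix{(P \ar@<0.6ex>[r]^{\omega}& P\ar@<0.6ex>[l]^{\id})}$, and both of these are null-homotopic pairs, i.e.\ zero objects of $\DB(\omega)=\underline{\Pair}(\omega)$. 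Hence $F$ sends projectives to projectives and therefore descends to a triangle functor $\underline{F}:\umon(\omega,\cp)\rt\underline{\Pair}(\omega)$ (the triangulated structures being the standard ones coming from the Frobenius structures, so $\underline{F}$ automatically commutes with shifts and sends conflations to triangles).

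\textbf{Equivalence.} For essential surjectivity and fullness/faithfulness I would produce a quasi-inverse $G:\Pair(\omega)\rt\mon(\omega,\cp)$ by $G\xymatrix{(P_1 \ar@<0.6ex>[r]^{\rho_1}& P_0\ar@<0.6ex>[l]^{\rho_0})}:=(P_1\st{\rho_1}\rt P_0)$ when $\rho_1$ happens to be monic, and in general first replace the pair by an isomorphic (in $\DB(\omega)$) one with $\rho_1$ injective --- e.g.\ add the null-homotopic pair $\xymatrix{(P_0 \ar@<0.6ex>[r]^{\omega}& P_0\ar@<0.6ex>[l]^{\id})}$ and use the column $[\rho_1,\ \omega\cdot\id_{P_0}]^t$, which is split monic in degree $1$; then $\rho_0$ and $\id$ assemble the required backward map and the cokernel is killed by $\omega$ since $\rho_0\rho_1=\omega$. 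Checking $\underline{G}\,\underline{F}\cong\id$ and $\underline{F}\,\underline{G}\cong\id$ then reduces to verifying these modifications differ from the identity by maps factoring through projective-injectives, which is exactly the null-homotopy relation in both categories. \textbf{The main obstacle} I anticipate is precisely this last point: matching the homotopy relation of $\DB(\omega)$ (the $s_0,s_1$ with $\psi_0=q_1s_0+s_1\rho_0$, $\psi_1=s_0\rho_1+q_0s_1$) with ``factors through a projective object of $\mon(\omega,\cp)$'' --- one should either invoke the characterization of null-homotopic morphisms recalled in the Remark following Theorem~\ref{frob} (via \cite{bahlekeh2023homotopy}) or check directly that a morphism in $\mon(\omega,\cp)$ factors through some $(Q\st{\id}\rt Q)\oplus(P\st{\omega}\rt P)$ iff its image under $F$ is null-homotopic, and this bookkeeping with the split maps is where the real work lies. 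The fact that $\rho_1$ need not be a monomorphism for a general pair, so that $G$ is only defined after a harmless stabilization, is the reason the statement is about the \emph{stable} categories and not the exact categories themselves.
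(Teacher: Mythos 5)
Your construction is essentially the paper's: the same functor $F$ built from the canonical backward map $f_{\si}$ of Lemma \ref{si}, the same verification that $\psi_0f=f'\psi_1$ forces compatibility with the backward maps (compose with $f'$ and cancel), the same identification of the images of the projective generators, and the same descent to stable categories. The one place you genuinely diverge is in producing the quasi-inverse, and there you have a misconception: you assert that for a general object $(P_1\rightleftarrows P_0)$ of $\Pair(\omega)$ the map $\rho_1$ "need not be a monomorphism," and you build a stabilization to repair this. In fact $\rho_0\rho_1=\omega\cdot\id_{P_1}$ with $\omega$ a non-zerodivisor and $P_1$ finitely generated projective (hence free over the local ring $S$) forces $\rho_1$ to be injective, and $\rho_1\rho_0=\omega\cdot\id_{P_0}$ gives $\omega P_0\subseteq\rho_1(P_1)$, so $\omega\cok\rho_1=0$. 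Thus \emph{every} object of $\Pair(\omega)$ already lies in the image of $F$ on the nose, $G$ is defined without any stabilization, and $F$ is an equivalence (indeed an isomorphism) of the exact categories themselves --- which is what the paper uses when it says $F$ is evidently an additive equivalence. Your stabilization detour is harmless and would still yield the stable equivalence, but it is unnecessary, and your closing remark that the non-injectivity of $\rho_1$ is "the reason the statement is about the stable categories" is not correct; the passage to stable categories is needed only because the target of interest, $\DB(\omega)=\underline{\Pair}(\omega)$, is itself a stable category. One further point where you are lighter than the paper: you assert that $\underline{F}$ is "automatically" a triangle functor. The paper instead verifies explicitly that $\underline{F}\syz^{-1}_{\mon}=\syz^{-1}_{\Pair}\underline{F}$ on objects and morphisms (via the standard conflations into $(P\oplus Q\st{\omega\oplus\id}\rt P\oplus Q)$) before invoking Happel's Lemma 2.8; if you want to cite the general principle that an exact functor between Frobenius categories preserving projective-injectives induces a triangle functor, you should say so and give a reference rather than leave it implicit.
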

\begin{proof}
First we define a functor $F:\mon(\omega, \cp)\lrt{\Pair}(\omega)$, as follows.
Take an arbitrary object $(P_1\st{\rho_1}\rt P_0)\in\mon(\omega, \cp)$. According to Lemma \ref{si}, there is a unique morphism $\rho_0:P_0\rt P_1$ such that $\rho_1\rho_0=\omega.\id_{P_0}$ and $\rho_0\rho_1=\omega.\id_{P_1}$. Namely, ${\xymatrix{(P_1 \ar@<0.6ex>[r]^{\rho_1}& P_0\ar@<0.6ex>[l]^{\rho_0})}}\in \Pair (\omega)$.
Now we set $F(P_1\st{\rho_1}\rt P_0):={\xymatrix{(P_1 \ar@<0.6ex>[r]^{\rho_1}& P_0\ar@<0.6ex>[l]^{\rho_0})}}$. Next assume that $\psi=(\psi_1, \psi_0):(P_1\st{\rho_1}\rt P_0)\lrt(P'_1\st{\rho'_1}\rt P'_0)$ is a morphism in $\mon(\omega, \cp)$. We would like to show that $\Psi=(\psi_1, \psi_0):{\xymatrix{(P_1 \ar@<0.6ex>[r]^{\rho_1}& P_0\ar@<0.6ex>[l]^{\rho_0})}} \lrt {\xymatrix{(P_1' \ar@<0.6ex>[r]^{\rho'_1}& P'_0\ar@<0.6ex>[l]^{\rho'_0})}}$ is a morphism in $\Pair(\omega)$. Namely, we show that the diagram
\[\xymatrix{ P_1 \ar[r]^{\rho_1} \ar[d]_{\psi_1} & P_0 \ar[r]^{\rho_0} \ar[d]_{\psi_0} & P_1 \ar[d]_{\psi_1}
\\ P'_1 \ar[r]^{\rho'_1} & P'_0 \ar[r]^{\rho'_0} & P'_1,}\] commutes. It is only needed to examine the commutativity of the right square, because by the hypothesis the left one is commutative. This indeed follows from the equalities: $\rho'_1(\rho'_0\psi_0-\psi_1\rho_0)=\rho'_1\rho'_0\psi_0-\rho'_1\psi_1\rho_0=\omega\psi_0-\psi_0\omega=0$, and the fact that $\rho'_1$ is a monomorphism. So, we define  $F(\psi):=\Psi=(\psi_1, \psi_0)$. It is evident that $F$ is an additive equivalence functor. We claim that $F$ is an exact functor. To do this, take a short exact sequence $$0\lrt(P'_1\st{\rho'_1}\rt P'_0)\st{\varphi=(\varphi_1,\varphi)}\lrt (P_1\st{\rho_1}\rt P_0)\st{\psi=(\psi_1,\psi_0)}\lrt (P''_1\st{\rho''_1}\rt P''_0)\lrt 0,$$ in $\mon(\omega, \cp)$. As observed just above, there is a commutative diagram

$$\xymatrix{0\ar[r]& P'_1\ar[r]^{\varphi_1} \ar[d]_{\rho'_1} & P_1 \ar[d]_{\rho_1}\ar[r]^{\psi_1}& P''_1 \ar[d]_{\rho''_1}\ar[r]& 0\\ 0\ar[r]& P'_0 \ar[r]^{\varphi_0}\ar[d]_{\rho'_0} & P_0 \ar[d]_{\rho_0}\ar[r]^{\psi_0} & P''_0\ar[d]^{\rho''_0}\ar[r]& 0 \\ 0\ar[r] & P'_1 \ar[r]^{\varphi_1} & P_1\ar[r]^{\psi_1}& P''_1\ar[r]& 0.}$$One should note that, by our assumption, rows are exact. This means that
$$0\lrt {\xymatrix{(P'_1 \ar@<0.6ex>[r]^{\rho'_1}& P'_0\ar@<0.6ex>[l]^{\rho'_0})}}\st{\Phi=(\varphi_1,\varphi_0)}\lra {\xymatrix{(P_1 \ar@<0.6ex>[r]^{\rho_1}& P_0)\ar@<0.6ex>[l]^{\rho_0}}}\st{\Psi=(\psi_1,\psi_0)}\lra {\xymatrix{(P''_1 \ar@<0.6ex>[r]^{\rho''_1}& P''_0)\ar@<0.6ex>[l]^{\rho''_0}}}\lra 0,$$ is a short exact sequence in $\Pair(\omega)$, and so, $F$ is an exact functor, as claimed. Moreover, it is obvious that the functor $F$ carries projective objects in $\mon(\omega, \cp)$ to projective objects in $\Pair(\omega)$. Indeed, it follows from our definition that $F(P\st{\id}\rt P)=\xymatrix{(P \ar@<0.6ex>[r]^{\id}& P)\ar@<0.6ex>[l]^{\omega}}$ and $F(P\st{\omega}\rt P)=\xymatrix{(P \ar@<0.6ex>[r]^{\omega}& P)\ar@<0.6ex>[l]^{\id}}$, which are projective objects of $\Pair(\omega)$, see \cite{orlov2003triangulated} and also \cite[Theorem 2.4]{bahlekehgpair}. Now Lemma \ref{inj} together with \cite[Theorem 2.4]{bahlekehgpair}, gives the desired result. Hence there is an induced functor $\underline{F}:\umon(\omega, \cp)\lrt\underline{\Pair}(\omega)$, see \cite[ 2.8, page 22]{happel1988triangulated}. Evidently, the induced functor $\underline{F}$ is also an equivalence. So it remains to show that $\underline{F}$ is a triangle functor. According to \cite[ Lemma 2.8, page 23]{happel1988triangulated}, it suffices to show that
$\underline{F}\syz^{-1}_{\mon}=\syz^{-1}_{\Pair}\underline{F}$. Take an arbitrary object $(P\st{f}\rt Q)\in\umon(\omega, \cp)$. It is easily seen that $0\lrt(P\st{f}\rt Q)\st{i=(i_1, i_0)}\lrt (P\oplus Q\st{\omega\oplus\id}\rt P\oplus Q)\st{\pi=(\pi_1, \pi_0)}\lrt (Q\st{-f_{\si}}\rt P)\lrt 0$ is a short exact sequence in $\mon(\omega, \cp)$, where $i_1=[\id~~f]^t$, $i_0=[f_{\si}~~\id]^t$, $\pi_1=[-f~~\id]$ and $\pi_0=[\id~~-f_{\si}]$. This yields that $\syz^{-1}_{\mon}(P\st{f}\rt Q)=(Q\st{-f_{\si}}\rt P)$, because the middle term is projective. Thus $\underline{F}(\syz^{-1}_{\mon}(P\st{f}\rt Q))=\xymatrix{(Q \ar@<0.6ex>[r]^{-f_{\si}}& P\ar@<0.6ex>[l]^{-f})}$, thanks to the fact that ${(f_{\si})}_{\si}=f$. Similarly, one may see that $0\lrt \xymatrix{(P \ar@<0.6ex>[r]^{f}& Q\ar@<0.6ex>[l]^{f_{\si}})}\lrt \xymatrix{(P\oplus Q \ar@<0.6ex>[r]^{\omega\oplus\id}& P\oplus Q\ar@<0.6ex>[l]^{\id\oplus\omega})}\lrt \xymatrix{(Q \ar@<0.6ex>[r]^{-f_{\si}}& P\ar@<0.6ex>[l]^{-f})}\lrt 0$ is a short exact sequence in $\Pair(\omega)$. Again, as the middle term is a projective object of $\Pair(\omega)$, the equality $\syz^{-1}_{\Pair}\xymatrix{(P \ar@<0.6ex>[r]^{f}& Q\ar@<0.6ex>[l]^{f_{\si}})}=\xymatrix{(Q \ar@<0.6ex>[r]^{-f_{\si}}& P\ar@<0.6ex>[l]^{-f})}$ holds. Now, since $\underline{F}(P\st{f}\rt Q)=\xymatrix{(P \ar@<0.6ex>[r]^{f}& Q\ar@<0.6ex>[l]^{f_{\si}})},$ we get that $\syz^{-1}_{\Pair}\underline{F}(P\st{f}\rt Q)=\xymatrix{(Q \ar@<0.6ex>[r]^{-f_{\si}}& P\ar@<0.6ex>[l]^{-f})}$. {Next, take a morphism $\psi=(\psi_1, \psi_0):(P_1\st{f}\rt P_0)\lrt(P'_1\st{f'}\rt P'_0)$ in $\umon(\omega, \cp)$. So, one may get the commutative diagram{\footnotesize{\[\xymatrix{&P_1\ar[rr]^{[\id~~f]^t}\ar[dd]_{\psi_1}\ar[ld]_{f}&&P_1\oplus P_0\ar[rr]^{[-f~~\id]}\ar[dd]\ar[dl]_{\omega\oplus\id}&&P_0\ar[dl]_{-f_{\si}}\ar[dd]_{\psi_0}\\ P_0\ar[rr]^{[f_{\si}~~\id]^t}\ar[dd]_{\psi_0}&& P_1\oplus P_0\ar[rr]^{[\id~~-f_{\si}]}\ar[dd]&&P_1\ar[dd]_{\psi_1}\\&P'_1\ar[rr]^{[\id~~f']^t}\ar[dl]_{f'}&&P'_1\oplus P'_0\ar[rr]^{[-f'~~\id]}\ar[dl]_{\omega\oplus\id}&&P'_0\ar[dl]_{-f'_{\si}}\\ P'_0\ar[rr]^{[f'_{\si}~~\id]^t}&&P'_1\oplus P'_0\ar[rr]^{[\id~~-f'_{\si}]}&&P'_1,}\]}}where the morphisms in the middle columns, $P_1\oplus P_0\lrt P'_1\oplus P'_0$, are $\psi_1\oplus\psi_0$. This diagram reveals that $\syz^{-1}_{\mon}(\psi)=\psi_{\si}:=(\psi_0, \psi_1)$ in $\umon(\omega, \cp)$. Furthermore, for a given morphism $\Psi=(\psi_1, \psi_0)$ in $\Pair(\omega)$, one may construct a similar diagram, indicating that $\syz^{-1}_{\Pair}(\Psi)=\Psi_{\si}=(\psi_0, \psi_1)$ in $\underline{\Pair}(\omega)$. So, according to our notation, $\underline{F}(\psi_{\si})=\Psi_{\si}$.
Now suppose that $\psi=(\psi_1, \psi_0)$ is a morphism in $\mon(\omega, \cp)$. As already observed, we have the equalities: $\syz^{-1}_{\Pair}\underline{F}(\psi)=\syz^{-1}_{\Pair}(\Psi)=\Psi_{\si}$. Moreover, $\underline{F}\syz_{\mon}^{-1}(\psi)=\underline{F}(\psi_{\si})=\Psi_{\si}$.} So the proof is finished.
\end{proof}

As we have already mentioned above, the category $\DB(\omega)$ is triangle equivalent to the stable category $\underline{\Pair}(\omega)$. This, together with Theorem \ref{sg}, yields the following result.
\begin{cor}\label{sg1}There is a triangle equivalence functor $F':\umon(\omega, \cp)\lrt\DB(\omega)$.
\end{cor}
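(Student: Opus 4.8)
The plan is to realise $F'$ as the composite of the triangle equivalence $\underline{F}\colon\umon(\omega,\cp)\to\underline{\Pair}(\omega)$ produced in Theorem \ref{sg} with a triangle equivalence $\underline{\Pair}(\omega)\xrightarrow{\ \simeq\ }\DB(\omega)$. For the second factor, recall from \ref{matrix} that $\DB(\omega)$ is, by construction, the homotopy category of matrix factorizations: its objects are exactly the objects of $\Pair(\omega)$, and its morphism sets are those of $\Pair(\omega)$ modulo the null-homotopic morphisms. On the other hand $\Pair(\omega)$ is a Frobenius category whose projective-injective objects are precisely the pairs that are homotopic to zero (see \cite{orlov2003triangulated}), so a morphism in $\Pair(\omega)$ factors through a projective object if and only if it is null-homotopic. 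Hence the identity on objects induces an isomorphism of categories $\underline{\Pair}(\omega)\to\DB(\omega)$, sending the class of a morphism modulo maps factoring through projectives to its class modulo null-homotopic maps. This isomorphism is a triangle functor: in both categories the distinguished triangles are exactly those isomorphic to the standard triangles built from mapping cones over $\Pair(\omega)$, and the suspension of $\underline{\Pair}(\omega)$ coincides with the shift of $\DB(\omega)$, both being induced by interchanging the two components of a pair.

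Granting this identification, I set $F':=\bigl(\underline{\Pair}(\omega)\xrightarrow{\ \simeq\ }\DB(\omega)\bigr)\circ\underline{F}$. A composite of triangle equivalences is again a triangle equivalence, so $F'\colon\umon(\omega,\cp)\to\DB(\omega)$ is the desired triangle equivalence.

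I do not expect a genuine obstacle here: the substantive work has already been carried out in Theorem \ref{sg}, and the identification $\underline{\Pair}(\omega)\simeq\DB(\omega)$ is recalled from Orlov in \ref{matrix}. The single point that warrants a line of care is checking that this identification respects the triangulated structures and not merely the additive ones, which is immediate since both triangulations arise from the same mapping-cone construction on $\Pair(\omega)$.
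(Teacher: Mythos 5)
Your argument is exactly the paper's: the corollary is obtained by composing the triangle equivalence $\underline{F}$ of Theorem \ref{sg} with the identification $\underline{\Pair}(\omega)\simeq\DB(\omega)$ recalled from Orlov in \ref{matrix}. Your extra remarks verifying that this identification respects the triangulated structures are correct and only make explicit what the paper leaves implicit.
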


{The result below has been proved in \cite[Proposition 3.6]{bahlekehgpair}, under the assumption that $R$ is Gorenstein.
\begin{prop}\label{dd}There is a triangle equivalence functor $\underline{T}_1:\umon(\omega, \G)\lrt\underline{\G}(R)$, sending each object $(G\st{f}\rt P)$ to $\cok f$.
\end{prop}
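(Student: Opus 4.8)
The plan is to realize the functor already on the Frobenius category $\mon(\omega,\G)$ and then pass to the stable category. Set $T_1\colon\mon(\omega,\G)\rt\G(R)$ by $T_1(G\st{f}\rt P)=\cok f$ and $T_1(\psi_1,\psi_0)=$ the induced homomorphism $\cok f\rt\cok f'$. The first, and least formal, step is that $\cok f$ really is a Gorenstein projective $R$-module. From the defining sequence $0\rt G\st{f}\rt P\rt\cok f\rt0$ with $G\in\G(S)$ and $P$ projective one gets $\gpd_S\cok f\le1$, so the change-of-rings equality $\gpd_S M=\gpd_R M+1$ for finitely generated $R$-modules $M$ (valid since the non-zerodivisor $\omega$ annihilates $M$; cf. \cite{bahlekehgpair}) gives $\gpd_R\cok f=0$, i.e. $\cok f\in\G(R)$. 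Concretely, this reflects that reducing the ``matrix-factorization'' data $(G,P,f,f_{\si})$ of Lemma \ref{si} modulo $\omega$ gives an acyclic two-periodic $R$-complex $\cdots\rt P/\omega P\st{\overline{f_{\si}}}\rt G/\omega G\st{\overline{f}}\rt P/\omega P\rt\cdots$ in which $\cok f$ is a cycle, whose $P/\omega P$-terms are projective and whose $G/\omega G$-terms are Gorenstein projective over $R$; splicing complete resolutions of the latter into it produces one for $\cok f$.

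Next, $T_1$ is exact and sends projectives to projectives. A conflation in $\mon(\omega,\G)$ is a short exact sequence in $\Mor(S)$ with all three terms in $\mon(\omega,\G)$; applying $\cok$ to the resulting $2\times3$ diagram of $S$-modules, the snake lemma yields $0\rt\cok f'\rt\cok f\rt\cok f''\rt0$ in $\G(R)$. Moreover $T_1(Q\st{\id}\rt Q)=0$ and $T_1(P\st{\omega}\rt P)=P/\omega P$, a projective $R$-module, so by Lemma \ref{inj} every projective object of $\mon(\omega,\G)$ is carried to a projective $R$-module. Hence $T_1$ descends to a functor $\underline T_1\colon\umon(\omega,\G)\rt\underline{\G}(R)$, and an exact, projective-preserving functor between Frobenius categories is automatically a triangle functor: the identity $\underline T_1\syz^{-1}\cong\syz^{-1}\underline T_1$ is obtained by applying $T_1$ to $0\rt X\rt I\rt\syz^{-1}X\rt0$ with $I$ a projective-injective object, as in \cite[Lemma 2.8]{happel1988triangulated}.

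Now I would prove $\underline T_1$ is full and faithful by explicit lifting. For fullness, a homomorphism $h\colon\cok f\rt\cok f'$ lifts, by projectivity of $P$, along $P\twoheadrightarrow\cok f$ and $P'\twoheadrightarrow\cok f'$ to some $\psi_0\colon P\rt P'$; then $\psi_0$ carries $f(G)$ into $f'(G')$, so $\psi_0f=f'\psi_1$ for a unique $\psi_1$, and $\psi=(\psi_1,\psi_0)$ is a morphism in $\mon(\omega,\G)$ with $\cok\psi=h$. For faithfulness, suppose $\cok\psi$ factors through a projective $R$-module, which we may take to be $Q/\omega Q$ with $Q$ projective over $S$; using Lemma \ref{si} and projectivity of $P$ and $Q$ one lifts the two halves of this factorization to morphisms $X\rt(Q\st{\omega}\rt Q)\rt Y$ in $\mon(\omega,\G)$ whose composite $\varphi$ has $\cok\varphi=\cok\psi$. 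Then $\psi-\varphi$ induces the zero map on cokernels, hence $(\psi-\varphi)_0$ factors through the monomorphism $f'$, and a direct check shows any such morphism factors through the projective object $(P\st{\id}\rt P)$. Since $(Q\st{\omega}\rt Q)$ and $(P\st{\id}\rt P)$ are projective (Lemma \ref{proj}), $\psi$ factors through a projective object, so $\psi=0$ in $\umon(\omega,\G)$.

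For density, let $N\in\G(R)$. Choose a surjection $\varepsilon\colon F\twoheadrightarrow N$ from a free $S$-module (lift a surjection off a free $R$-module, using $\omega N=0$) and put $G=\ker\varepsilon$, so $N=\cok(G\hookrightarrow F)$ is annihilated by $\omega$; it remains to see that $G\in\G(S)$. Since $\gpd_R N=0$, the change-of-rings equality gives $\gpd_S N=1$, which forces its first syzygy $G$ to be Gorenstein projective over $S$ (compare $0\rt G\rt F\rt N\rt0$ with a length-one Gorenstein resolution of $N$ via Schanuel's lemma). Thus $(G\hookrightarrow F)\in\mon(\omega,\G)$ is sent to $N$, so $\underline T_1$ is dense; combined with the previous steps, $\underline T_1$ is a triangle equivalence. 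The only genuinely non-formal ingredient — and exactly what allows the Gorenstein hypothesis on $R$ in \cite[Proposition 3.6]{bahlekehgpair} to be dropped — is the change-of-rings theorem $\gpd_S(-)=\gpd_R(-)+1$ for modules annihilated by $\omega$, used in its easy direction for the well-definedness of $T_1$ and in the reverse direction for density.
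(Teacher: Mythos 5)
Your proof is correct and shares the paper's overall skeleton (define $T_1$ on the exact category $\mon(\omega,\G)$, check exactness with the snake lemma, check preservation of projectives, descend to the stable categories, and obtain the triangulated structure from Happel's Lemma I.2.8), but you handle the two substantive steps by genuinely different means. For well-definedness and density the paper argues by hand: it uses Rees's lemma $\Ext^{i+1}_S(M,S)\cong\Ext^i_R(M,R)$ together with the four-term exact sequence $0\rt M\rt\bar G\rt\bar P\rt M\rt 0$ coming from $\Tor_1^S(M,R)\cong M$ to verify directly that $M=\cok f$ is totally reflexive over $R$; you instead invoke the Auslander--Bridger change-of-rings equality $\gpd_S M=\gpd_R M+1$ for modules killed by the regular element $\omega$, in both directions. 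That equality is a genuine theorem (it is precisely a point where Gorenstein dimension behaves better than projective dimension, for which descent fails), so your route is legitimate and shorter, but it should be cited to Auslander--Bridger or to \cite{christensen2000gorenstein} rather than to \cite{bahlekehgpair}; note also that your back-up ``splicing'' sketch only produces an acyclic two-periodic complex and still needs the $\Hom_R(-,R)$-exactness that the paper's $\Ext$-computation supplies, so on its own it is not a complete substitute for the cited theorem. For full faithfulness the paper outsources to \cite[Proposition 3.6]{bahlekehgpair}, a result stated there under the very Gorenstein hypothesis that the present proposition is meant to drop, whereas your explicit lifting arguments (fullness via projectivity of $P$; faithfulness via factoring $\varphi$ through $(Q\st{\omega}\rt Q)$ and $\psi-\varphi$ through $(P\st{\id}\rt P)$) are self-contained and check out; this is arguably the more robust way to present that step.
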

\begin{proof}
Take an arbitrary object $(G\st{f}\rt P)\in\mon(\omega, \G)$. We show that $\cok f\in\G(R)$. Set, for the simplicity, $M:=\cok f$. As $\gpd_SM\leq 1$, $\Ext^i_S(M, S)=0$ for all $i\geq 2$, and so, applying \cite[Lemma 2(i), page 140]{matsumura1989commutative} yields that $\Ext_R^i(M, R)=0$ for all $i\geq 1$. Moreover, applying the functor $M\otimes_S-$ to the short exact sequence of $S$-modules $0\rt S\st{\omega}\rt S\rt R\rt 0$ and using the fact that $\omega M=0$, enable us to infer that $\Tor_1^S(M, R)\cong M$. This, in turn, gives us the exact sequence of $R$-modules $0\rt M\rt\bar{G}\rt\bar{P}\rt M\rt 0$, where $\bar{(-)}=S/{(\omega)}\otimes_S-.$ Letting $L:=\Ker(\bar{P}\rt M)$, one may easily see that $\Ext^i_R(L, R)=0$, for all $i\geq 1$. Consequently, the sequence of $R$-modules $0\rt M^*\rt{\bar{P}}^*\rt{\bar{G}}^*\rt M^*\rt 0$ is exact, where $(-)^*=\Hom_R(-, R)$. Thus we will get the following commutative diagram of $R$-modules:
\[\xymatrix{0\ar[r]& M \ar[r] \ar[d]_{\varphi_1} & \bar{G} \ar[r] \ar[d]_{\varphi_2} & \bar{P} \ar[d]_{\varphi_3}\ar[r]& M\ar[r]\ar[d]_{\varphi_1}& 0\\ 0\ar[r] & M^{**} \ar[r] & {\bar{G}}^{**} \ar[r] & {\bar{P}}^{**}\ar[r]^{\psi} & M^{**}.}\]Since the morphisms $\varphi_2$ and $\varphi_3$ are isomorphism, one may observe that the same is true for $\varphi_1$. Thus, $\psi$ will be an epimorphism, and in particular, two rows will be isomorphic. Now since $\Ext^i_R({\bar{G}}^*, R)=0$ for all $i\geq 1$, we deduce that $\Ext^i_R(M^*, R)=0$ for all $i\geq 1$, and so, $M\in\G(R)$. {Now we put $T_1(G\st{g}\rt P):=\cok g$. Moreover, for a given morphism $\psi=(\psi_1, \psi_0): (G\st{g}\rt P)\lrt (G'\st{g'}\rt P')$ in $\mon(\omega, \G)$, there is a unique induced morphism $\cok g\rt\cok g'$ in $\G(R)$, and we define $T_1(\psi):=(\cok g\rt\cok g')$. It is clear that $T_1:\mon(\omega, \G)\lrt\G(R)$ is a functor. Assume that $0\lrt (G'\st{g'}\rt P')\lrt (G\st{g}\rt P)\lrt (G''\st{g''}\rt P'')\lrt 0$ is a short exact sequence in $\mon(\omega, \G)$. Applying the snake lemma gives us the short exact sequence $0\rt\cok g'\rt\cok g\rt\cok g''\rt 0$ in $\G(R)$. This means that $T_1$ is an exact functor. As it was shown in the proof of \cite[Proposition 3.6]{bahlekehgpair}, $T_1$ sends projective objects to projective object, and so, it induces the functor $\underline{T}_1:\umon(\omega, \G)\lrt\underline{\G}(R)$. In particular, this functor is an equivalence of categories, see \cite[Proposition 3.6]{bahlekehgpair}.} Now we show that $\underline{T}_1$ is a triangle functor. To do this, take an arbitrary object $(G\st{g}\rt P)\in\mon(\omega, \G)$. Since $G$ is Gorenstein projective, there is a short exact sequence of $S$-modules, $0\rt G\st{h}\rt Q\st{h'}\rt G_1\rt 0$, where $Q$ is projective and $G_1$ is Gorenstein projective. As we have observed in the proof of Proposition \ref{eno}, there is a short exact sequence $$0\lrt (G\st{g}\rt P)\lrt(Q\oplus P\st{\omega\oplus\id}\lrt Q\oplus P)\lrt (G_1\oplus P\st{l}\rt Q)\lrt 0,$$in $\mon(\omega, \G)$. Since the middle term is projective, $\syz^{-1}_{\mon}(G\st{g}\rt P)=(G_1\oplus P\st{l}\rt Q)$, and so, $\underline{T}_1(\syz^{-1}_{\mon}(G\st{g}\rt P))=\cok l$. On the other hand, applying the snake lemma to the latter short exact sequence, gives us the short exact sequence, $0\rt\cok g\rt\cok(\omega\oplus\id)\rt\cok l\rt 0$ in $\G(R)$. This, in particular, reveals that $\syz^{-1}_{\G}\cok g=\cok l$, because $\cok(\omega\oplus\id)$ is a projective $R$-module. Namely, $\syz^{-1}_{\G}\underline{T}_1(G\st{g}\rt P)=\cok l$. Hence, $\underline{T}_1(\syz^{-1}_{\mon}(G\st{g}\rt P))=\syz^{-1}_{\G}\underline{T}_1(G\st{g}\rt P)$ in $\underline{\G}(R)$. {Next for a given morphism $\psi=(\psi_1, \psi_0):(G\st{g}\rt P)\lrt(G'\st{g'}\rt P')$ in $\umon(\omega, \G)$, one may easily get the following commutative diagram:
\[\xymatrix{0\ar[r]& (G\st{g}\rt P) \ar[r] \ar[d]_{\psi} & (Q\oplus P\st{\omega\oplus\id}\rt Q\oplus P) \ar[r] \ar[d]_{\varphi} & (G_1\oplus P\st{l}\rt Q) \ar[d]_{\theta}\ar[r]& 0\\0\ar[r]& (G'\st{g'}\rt P') \ar[r] & (Q'\oplus P'\st{\omega\oplus \id}\rt Q'\oplus P') \ar[r] & (G'_1\oplus P'\st{l'}\rt Q')\ar[r]& 0,}\]in $\mon(\omega, \G)$ with exact rows. One should note that the existence of the morphism $\varphi$ follows from the fact that $ (Q'\oplus P'\st{\omega\oplus \id}\rt Q'\oplus P')$ is a projective (and so, injective) object of $\mon(\omega, \G)$. According to this diagram, we obtain the equality $\syz_{\mon}^{-1}((G\st{g}\rt P)\st{\psi}\lrt (G'\st{g'}\rt P'))=( (G_1\oplus P\st{l}\rt Q)\st{\theta}\lrt (G'_1\oplus P'\st{l'}\rt Q'))$, and then, $\underline{T}_1\syz_{\mon}^{-1}((G\st{g}\rt P)\st{\psi}\lrt (G'\st{g'}\rt P'))=(\cok l\rt\cok l')$. On the other hand, applying the cokernel functor to the latter diagram, gives us the following commutative diagram: \[\xymatrix{0\ar[r]& \cok g \ar[r] \ar[d] & Q/{\omega Q} \ar[r] \ar[d] & \cok l \ar[d]\ar[r]& 0\\0\ar[r]& \cok g' \ar[r] & Q'/{\omega Q'} \ar[r] & \cok l'\ar[r]& 0,}\]in $\G(R)$ with exact rows. Consequently, we get the equalities $$\syz^{-1}_{\G}\underline{T}_1((G\st{g}\rt P)\st{\psi}\lrt (G'\st{g'}\rt P'))=\syz^{-1}_{\G}(\cok g\rt\cok g')=(\cok l\rt\cok l').$$ Hence one may apply \cite[Lemma 2.8, page 23]{happel1988triangulated} and deduce that $\underline{T}_1$ is a triangle functor. So the proof is completed. }

\end{proof}

\begin{theorem}\label{gor} There is a fully faithful triangle functor $T:\umon(\omega, \G)\lrt\ds(R)$ sending each object $(G\st{f}\rt Q)$ to $\cok f$, viewed as a stalk complex. Moreover, $T$ is dense if and only if $R$ (and so, $S$) are Gorenstein rings.
\end{theorem}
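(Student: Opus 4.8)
The plan is to combine the triangle equivalence $\underline{T}_1:\umon(\omega,\G)\lrt\underline{\G}(R)$ from Proposition \ref{dd} with the classical comparison between the stable category of Gorenstein projective $R$-modules and the singularity category of $R$. First I would recall Buchweitz's theorem: for any noetherian ring $R$ there is a fully faithful triangle functor $\underline{\G}(R)\lrt\ds(R)$ induced by viewing a Gorenstein projective module as a stalk complex, and this functor is a triangle equivalence precisely when $R$ is Gorenstein (equivalently, when every finitely generated $R$-module has finite Gorenstein projective dimension, so that $\ds(R)$ is generated by $\G(R)$). Composing this with $\underline{T}_1$ yields a fully faithful triangle functor $T:\umon(\omega,\G)\lrt\ds(R)$ which, unravelling the definitions, sends $(G\st{f}\rt P)$ to $\cok f$ regarded as a stalk complex. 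Since $\underline{T}_1$ is an equivalence, $T$ is dense if and only if the Buchweitz functor $\underline{\G}(R)\lrt\ds(R)$ is dense, which gives exactly the ``if and only if'' clause, modulo checking that Gorensteinness of $R$ forces Gorensteinness of $S$.

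For the latter implication I would use the hypothesis that $\omega\in\n$ is a non-zerodivisor and $R=S/(\omega)$: if $R$ is Gorenstein local then, since a local ring is Gorenstein iff it has finite injective dimension over itself, one concludes $S$ is Gorenstein by the standard descent along a regular element (for instance $\mathsf{inj.dim}_S S=\mathsf{inj.dim}_R R+1<\infty$, cf.\ Bruns--Herzog). Conversely, if $S$ is Gorenstein then so is $R$, since the quotient of a Gorenstein local ring by a regular element is again Gorenstein. This handles the parenthetical ``and so, $S$'' without any extra work.

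Concretely, the steps in order are: (1) state and cite the Buchweitz equivalence $\db(R)/\mathsf{perf}(R)\simeq\underline{\mathsf{MCM}}(R)$ in its Gorenstein-projective formulation, noting the natural functor $\underline{\G}(R)\lrt\ds(R)$ is always fully faithful and is dense iff $R$ is Gorenstein; (2) set $T:=(\underline{\G}(R)\hookrightarrow\ds(R))\circ\underline{T}_1$ and check it is a fully faithful triangle functor as a composite of such, and that on objects it is $(G\st{f}\rt P)\mapsto\cok f$ by Proposition \ref{dd}; (3) deduce that $T$ is dense iff $\underline{\G}(R)\lrt\ds(R)$ is dense iff $R$ is Gorenstein, using that $\underline{T}_1$ is essentially surjective; (4) record the equivalence ``$R$ Gorenstein $\iff$ $S$ Gorenstein'' via the regular element $\omega$.

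The main obstacle is really step (1): one must be careful about which version of Buchweitz's theorem is being invoked, because the cleanest statement ($\ds(R)\simeq\underline{\mathsf{MCM}}(R)$) already presupposes $R$ Gorenstein, so instead I need the more refined statement that for arbitrary noetherian $R$ the totally reflexive (Gorenstein projective) modules embed fully faithfully in $\ds(R)$ with image the thick subcategory they generate, together with the characterization that this image is all of $\ds(R)$ exactly when $R$ is Gorenstein. This is standard (see Buchweitz, or Beligiannis, or Chen's survey), but the density direction when $R$ is \emph{not} assumed Gorenstein needs the observation that an arbitrary $R$-module has finite Gorenstein projective dimension iff $R$ is Gorenstein, which is where the ``only if'' of the density claim comes from; verifying that no finiteness hypothesis beyond noetherian-local is needed here is the point requiring the most care.
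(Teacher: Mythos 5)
Your construction of $T$ as the composite of $\underline{T}_1$ with the stalk-complex functor $T_2:\underline{\G}(R)\rt\ds(R)$, the full faithfulness of $T_2$ in general, and the use of Buchweitz's equivalence for the ``if'' direction of density are exactly what the paper does (it cites \cite[Theorem 3.1]{bergh2015gorenstein} for full faithfulness of $T_2$ and Buchweitz--Happel for the Gorenstein case). Your step (4), relating Gorensteinness of $R$ and $S$ through the regular element $\omega$, is also unproblematic.

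The gap is in the ``only if'' direction, and you have correctly located it but not closed it. You reduce density $\Rightarrow$ Gorenstein to the claim that $T_2$ dense forces $R$ Gorenstein, and justify this by ``every finitely generated $R$-module has finite Gorenstein projective dimension iff $R$ is Gorenstein.'' That equivalence (Auslander--Bridger applied to the residue field) is fine, but it does not follow from density alone: density only says that every object of $\ds(R)$ is \emph{isomorphic in the singularity category} to a stalk complex of a Gorenstein projective module, and one still has to show that such an isomorphism forces finite Gorenstein dimension. This is precisely the step the paper supplies, and it is not a one-line citation of Buchweitz (whose clean equivalence already presupposes $R$ Gorenstein). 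The paper's argument: take $(G\st{f}\rt P)$ with $\cok f\cong k$ in $\ds(R)$, write the isomorphism as a roof $\cok f\st{s_1}\llf X^{\bullet}\st{s_2}\lrt k$ with perfect cones, and then work \emph{over $S$}: since $\pd_SR=1$, the perfect $R$-complexes $\con(s_1)$ and $\con(s_2)$ have finite projective dimension over $S$, while $\cok f$ has Gorenstein projective dimension at most one over $S$ by construction; the two-out-of-three property for Gorenstein dimension of complexes (\cite[Theorem 3.9(1)]{veliche2006gorenstein}) applied to the two triangles then gives $\gpd_Sk<\infty$, hence $S$ (and so $R$) is Gorenstein by \cite[(1.4.9)]{christensen2000gorenstein}. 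You could run the same roof-and-triangle argument entirely over $R$ (using that $\cok f$ is Gorenstein projective over $R$ by Proposition \ref{dd}), but some such argument must appear; as written, your step (3) asserts the key implication rather than proving it, and the reference you gesture at (``Buchweitz, or Beligiannis, or Chen's survey'') does not contain it in the form and generality you need.
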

\begin{proof}It is known that $T_2:\underline{\G}(R)\rt\ds(R)$ assigning each object to its stalk complex, is a fully faithful triangle functor, see \cite[Theorem 3.1]{bergh2015gorenstein}. This, in conjunction with Proposition \ref{dd}, yields that $T=T_2\circ \underline{T}_1:\umon(\omega, \G)\rt\ds(R)$ sending each object $(G\st{f}\rt P)$ to $\cok f$, as a stalk complex, is a fully faithful triangle functor, giving the first assertion. For the second assertion, assume that $R$ is Gorenstein. {By a fundamental result of Buchweitz and Happel \cite{buchweitz1987maximal}, the triangle functor $T_2$ is equivalence. This together with Proposition \ref{dd}, would imply that $T$ is an equivalence functor}, as well.
Conversely, assume that the functor $T$ is dense. Assume that $k$ is the residue field of $S$. Since $T$ is dense, there is an object $(G\st{f}\rt P)\in\mon(\omega, \G)$ such that  $T(f)=\cok f\cong k$ in $\ds(R)$. Assume that $e:\cok f\lrt k$ is an isomorphism of stalk complexes in $\ds(R)$. So $e$ is represented by a diagram
$$ \xymatrix{ &X^{\bullet} \ar[dl]_{s_1} \ar[dr]^{s_2} \\ \cok f && k } $$of complexes and maps in $\db(R)$, where the mapping cones $\con(s_1)$ and $\con(s_2)$ are prefect complexes. Take the exact triangle $X^{\bullet}\st{s_1}\lrt\cok f\lrt\con(s_1)\lrt X^{\bullet}[1]$ in $\db(R)$. Since $\con(s_1)$, as a complex of $R$-modules, has finite projective dimension and $\pd_SR=1$, its projective dimension over $S$, will be also finite. Combining this with the fact that the Gorenstein projective dimension of the  $S$-module $\cok f$ is at most one, ensures that the Gorenstein projective dimension of $X^{\bullet}$ over $S$ is finite, see \cite[Theorem 3.9(1)]{veliche2006gorenstein}. Now consider the exact triangle $X^{\bullet}\st{s_2}\lrt k\lrt\con(s_2)\lrt X^{\bullet}[1]$. Since $X^{\bullet}$ and $\con(s_2)$, as  complexes of $S$-modules, have finite projective dinension, one may deduce that the Gorenstein projective dimension of $k$ over $S$  is finite. Thus $S$, and so $R$, will be Gorenstien rings, see \cite[(1.4.9)]{christensen2000gorenstein}. The proof then is completed.
\end{proof}

\begin{theorem}\label{reg1} The fully faithful triangle functor $T\circ\underline{i}:\umon(\omega, \cp)\lrt\ds(R)$ is dense if and only if $S$ is a regular ring.
\end{theorem}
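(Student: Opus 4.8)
The plan is to read off both implications from results already in hand, together with the classical fact that a local ring is regular exactly when its residue field has finite projective dimension. For \textbf{sufficiency}, I would first note that when $S$ is regular every finitely generated $S$-module has finite projective dimension, so a Gorenstein projective $S$-module --- being Gorenstein projective and of finite projective dimension --- must be projective; hence $\G(S)=\cp(S)$. Consequently $\mon(\omega,\cp)$ and $\mon(\omega,\G)$ coincide as exact categories, $\underline i$ is the identity functor, and $T\circ\underline i$ is (naturally isomorphic to) $T$. Since $R=S/(\omega)$ is then a hypersurface, hence Gorenstein, Theorem \ref{gor} tells us that $T$ is dense, and therefore so is $T\circ\underline i$.

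For \textbf{necessity}, assume $T\circ\underline i$ is dense and let $k$ denote the common residue field of $S$ and $R$, viewed as a stalk complex in $\ds(R)$. Density provides an object $(P_1\st{f}\rt P_0)\in\mon(\omega,\cp)$ together with an isomorphism $\cok f\cong k$ in $\ds(R)=\db(R)/\mathsf{perf}(R)$. I would then run, with ordinary projective dimension over $S$ replacing Gorenstein projective dimension, precisely the argument from the second half of the proof of Theorem \ref{gor}: represent this isomorphism by a roof $\cok f\st{s_1}\llf X^{\bullet}\st{s_2}\lrt k$ of morphisms in $\db(R)$ whose mapping cones $\con(s_1)$ and $\con(s_2)$ are perfect complexes of $R$-modules. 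The three facts to invoke are: (i) $\cok f$ has projective dimension at most $1$ over $S$, being the cokernel of a map of finitely generated projective $S$-modules; (ii) $\pd_S R=1$ (via the free resolution $0\rt S\st{\omega}\rt S\rt R\rt 0$), whence any bounded complex of finitely generated free $R$-modules --- in particular $\con(s_1)$ and $\con(s_2)$ --- has finite projective dimension over $S$; and (iii) the complexes of finite projective dimension over $S$ constitute a thick subcategory of $\db(S)$. Feeding the triangle $X^{\bullet}\st{s_1}\lrt\cok f\lrt\con(s_1)\lrt X^{\bullet}[1]$ into (iii) with (i) and (ii) yields $\pd_S X^{\bullet}<\infty$; feeding $X^{\bullet}\st{s_2}\lrt k\lrt\con(s_2)\lrt X^{\bullet}[1]$ into (iii) then yields $\pd_S k<\infty$, and hence $S$ is regular.

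I do not anticipate a serious obstacle: the necessity half is an almost verbatim transcription of the device in Theorem \ref{gor}, and the only place where $\mon(\omega,\cp)$ --- rather than $\mon(\omega,\G)$ --- genuinely enters is ingredient (i), where a cokernel of a map of projectives has finite \emph{projective} dimension over $S$, not merely finite Gorenstein projective dimension; that single strengthening is exactly what upgrades the conclusion from ``$R$ and $S$ Gorenstein'' to ``$S$ regular''. The one point I would be careful to spell out is that, $\cok f\cong k$ being an \emph{isomorphism} in the Verdier quotient, not only the denominator $s_1$ but also the numerator $s_2$ of the representing roof becomes invertible there, so that $\con(s_1)$ and $\con(s_2)$ are both perfect --- the same mechanism already used in the proof of Theorem \ref{gor}.
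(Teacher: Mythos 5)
Your proposal is correct and follows essentially the same route as the paper: the paper's proof of Theorem \ref{reg1} is exactly the sketch you fill in, namely rerunning the roof-and-cone argument from Theorem \ref{gor} with ordinary projective dimension over $S$ in place of Gorenstein projective dimension (so that $\pd_S k<\infty$ forces regularity), and using that over a regular ring Gorenstein projectives are projective for the converse. No gaps to report.
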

\begin{proof} The result indeed follows from the proof of Theorem \ref{gor}, and using the facts that over regular rings, every Gorenstein projective module, is projective, and the finiteness of the projective dimension of the residue field $k$ of $S$, yields that $S$ is regular. Hence, the proof is finished.
\end{proof}
}

{\bf{Acknowledgments.}}
The authors would like to thank the referee for reading the paper very carefully and giving a lot of valuable suggestions kindly and patiently.

\bibliographystyle{siam}

\begin{thebibliography}{10}

\bibitem{asadollahi2022monomorphism}
{\sc J. Asadollahi, R. Hafezi, and S. Sadeghi}, {\em On the monomorphism category of n-cluster tilting subcate-
gories, Science China Mathematics}, 65 (2022), pp. 1343-1362.

\bibitem{bahlekehgpair}
{\sc A. Bahlekeh, F. S. Fotouhi, A. Nateghi, and S. Salarian}, {\em Gorenstein D-branes of type B}, available on
arXiv:2402.13833v1.

\bibitem{bahlekeh2023homotopy}
{\sc A. Bahlekeh, F. S. Fotouhi, A. Nateghi, and S. Salarian}, {\em The homotopy category of monomorphisms between projective modules}, Bulletin of the Malaysian
Mathematical Sciences Society, 46 (2023), p. 91.

\bibitem{bergh2015complete}
{\sc P. A. Bergh and D. A. Jorgensen}, {\em Complete intersections and equivalences with categories of matrix
factorizations}, Homology, Homotopy and Applications, 18 (2016), pp. 377-390.

\bibitem{bergh2015gorenstein}
{\sc P. A. Bergh, S. Oppermann, and D. A. Jorgensen}, {\em The Gorenstein defect category}, The Quarterly Journal
of Mathematics, 66 (2015), pp. 459-471.

\bibitem{bir}
{\sc G. Birkhoff}, {\em Subgroups of abelian groups}, Proc. Lond. Math. Soc.(II), 38 (1934), pp. 385-401.

\bibitem{buchweitz1987maximal}
{\sc R.-O. Buchweitz}, {\em Maximal Cohen-Macaulay modules and Tate-cohomology over Gorenstein rings}, (1987).

\bibitem{buhler2010exact}
{\sc T. Buhler}, {\rm Exact categories}, Expositiones Mathematicae, 28 (2010), pp. 1-69.

\bibitem{stable}
{\sc X.-W. Chen}, {\em The stable monomorphism category of a Frobenius category}, Math. Res. Lett., 18 (2011),
pp. 125-137.

\bibitem{chen2012three}
{\sc X.-W. Chen}, {\em Three results on Frobenius categories}, Mathematische Zeitschrift, 270 (2012), pp. 43-58.

\bibitem{christensen2000gorenstein}
{\sc L. W. Christensen}, {\em Gorenstein dimensions}, no. 1747, Springer Science \& Business Media, 2000.

\bibitem{eisenbud1980homological}
{\sc D. Eisenbud}, {\em Homological algebra on a complete intersection, with an application to group representations},
Transactions of the American Mathematical Society, 260 (1980), pp. 35-64.

\bibitem{enochs2011relative}
{\sc E. E. Enochs and O. M. Jenda}, {\em Relative homological algebra: Volume 1}, vol. 30, Walter de Gruyter, 2011.

\bibitem{hafezi2021subcategories}
{\sc R. Hafezi}, {\em From subcategories to the entire module categories}, in Forum Mathematicum, vol. 33, De Gruyter,
2021, pp. 245-270.

\bibitem{hafezi2022stable}
{\sc R. Hafezi and Y. Zhang}, {\rm Stable Auslander-Reiten components of monomorphism categories}, to appear in
Science China Math.

\bibitem{happel1988triangulated}
{\sc D. Happel}, {\em Triangulated categories in the representation of finite dimensional algebras}, vol. 119, Cambridge
University Press, 1988.

\bibitem{keller1990chain}
{\sc B. Keller}, {\em Chain complexes and stable categories}, manuscripta mathematica, 67 (1990), pp. 379-417.

\bibitem{keller2006differential}
{\sc B. Keller}, {\em On differential graded categories}, International Congress of Mathematicians, Vol. II. Eur. Math. Soc.,
Zurich, (2006).

\bibitem{kontsevich1994homological}
{\sc M. Kontsevich}, {\em Homological algebra of mirror symmetry}, Proceedings of ICM (Zurich), (1995), pp. 120-139.

\bibitem{kosakowska2023abelian}
{\sc J. Kosakowska, M. Schmidmeier, and M. Schreiner}, {\em Abelian p-groups with a p-bounded factor or a
p-bounded subgroup}, available at arXiv:2312.01451.

\bibitem{kussin2013nilpotent}
{\sc D. Kussin, H. Lenzing, and H. Meltzer}, {\em Nilpotent operators and weighted projective lines}, Reine Angew.
Math., 685 (2013), pp. 33-71.

\bibitem{matsumura1989commutative}
{\sc H. Matsumura}, {\em Commutative ring theory}, no. 8, Cambridge university press, 1989.

\bibitem{mitchell1965theory}
{\sc B. Mitchell}, {\em Theory of categories}, Academic Press, 1965.

\bibitem{orlov2003triangulated}
{\sc D. Orlov}, {\em Triangulated categories of singularities and D-branes in Landau-Ginzburg models}, (Russian) Tr.
Mat. Inst. Steklova 246, 3 (2004), pp. 240-262.

\bibitem{quillen2006higher}
{\sc D. Quillen}, {\em Higher algebraic k-theory: I}, in Higher K-Theories: Proceedings of the Conference held at the
Seattle Research Center of the Battelle Memorial Institute, from August 28 to September 8, 1972, Springer,
2006, pp. 85-147.

\bibitem{ringel2008invariant}
{\sc C. M. Ringel and M. Schmidmeier}, {\em Invariant subspaces of nilpotent linear operators I}, Reine Angew.
Math. Band, 614 (2008), pp. 1-52.

\bibitem{ringel2014submodule}
{\sc C. M. Ringel and P. Zhang}, {\em From submodule categories to preprojective algebras}, Mathematische
Zeitschrift, 278 (2014), pp. 55-73.

\bibitem{veliche2006gorenstein}
{\sc O. Veliche}, {\em Gorenstein projective dimension for complexes}, Transactions of the American Mathematical
Society, 358 (2006), pp. 1257-1283.

\bibitem{xiong2014auslander}
{\sc B.-L. Xiong, P. Zhang, and Y.-H. Zhang}, {\em Auslander-Reiten translations in monomorphism categories},
Forum Mathematicum, 26 (2014), pp. 863-912.

\bibitem{yoshino1990cohen}
{\sc Y. Yoshino}, {\em Cohen-Macaulay Modules Over Cohen-Macaulay Rings}, vol. 146, Cambridge University Press,
1990.










%
%
%
%
%
%
%
%
%
%
%
%
%
%
%
%
%
%
%
%
%
%
%
%
%



\end{thebibliography}

\end{document}